\documentclass[CRMATH,Unicode,manuscript]{cedram}
\usepackage[utf8]{inputenc}
\usepackage{comment}
\usepackage{amsmath,amsfonts,amssymb,amsthm}
\usepackage{xcolor}
\usepackage{mathrsfs}
\usepackage{float}
\usepackage{graphicx}
\newtheorem{proposition}{Proposition}[section]
\newtheorem{definition}[proposition]{Definition}
\newtheorem{lemma}[proposition]{Lemma}
\newtheorem{remark}[proposition]{Remark}
\newtheorem{theorem}[proposition]{Theorem}
\newtheorem{corollary}[proposition]{Corollary}
\title{Rough Paths above Weierstrass Functions}

\author{\firstname{Francesco} \lastname{Cellarosi}}
\address{Department of Mathematics and Statistics, Queen's University}
\email[F. Cellarosi]{fc19@queensu.ca}

\author{\firstname{Zachary} \lastname{Selk}}
\address{Department of Mathematics and Statistics, Queen's University}
\email[Z. Selk]{zachary.selk@queensu.ca}

\subjclass{60L20}

\numberwithin{equation}{section}

\begin{document}

\begin{abstract}
    Rough paths theory allows for a pathwise theory of solutions to differential equations driven by highly irregular signals. The fundamental observation of rough paths theory is that if one can define ``iterated integrals" above a signal, then one can construct solutions to differential equations driven by the signal.

    The typical examples of the signals of interest are stochastic processes such as (fractional) Brownian motion. However, rough paths theory is not inherently random and therefore can treat irregular deterministic driving signals such as a ({vector-valued}) Weierstrass function. 
    This note supplies a construction of a rough path above {vector-valued} Weierstrass functions. 
\end{abstract}

\maketitle

\section{Introduction}
Rough paths theory allows for a pathwise theory of solutions to differential equations driven by highly irregular signals. Rough paths are concerned with differential equations of the form
\begin{equation}\label{eq:diff-eq}
    dY(t)=b(Y(t),t)dt+c(Y(t),t) dX(t),
\end{equation}
where $b,c$ are nice enough functions and $X$ is some ``driving" signal $X(t)=(X^1(t),$ $...,X^d(t))$ with bad regularity - in particular $X$ lacking differentiability. In this case, we may interpret equation \eqref{eq:diff-eq} as the integral equation
\begin{equation}\label{eq:int-eq}
    Y(t)=Y(0)+\int_0^t b(Y(s),s)ds+\int_0^T c(Y(s),s) dX(s).
\end{equation}
However, if $X$ is not of bounded variation the last integral in equation \eqref{eq:int-eq} might not be well-defined as a Riemann-Stieltjes integral. In cases where $X$ is a stochastic process (in particular a martingale), the celebrated It\^o calculus can make sense of the last integral in equation \eqref{eq:int-eq}. It\^o theory averages over paths or takes limits in probability, thus making it unsuitable for pathwise calculus. Rough paths theory takes an alternative approach to the It\^o calculus: by ``enhancing" $X$ with some ``extra information" encoding an integration theory, we can construct pathwise solutions to \eqref{eq:int-eq}.

The fundamental observation of rough paths theory is that the issue of defining solutions to  \eqref{eq:int-eq} can be reduced to defining the iterated integrals $\int_s^t (X^i(r)-X^i(s))dX^j(r)$ for $1\leq i,j\leq d$.  The pair $\left(X(t)-X(s), \int_s^t (X(r)-X(s))\otimes dX(r)\right)$, where $\int_s^t (X(r)-X(s))\otimes dX(r)$ is the matrix of iterated integrals of the form $\int_s^t (X^i(r)-X^i(s))dX^j(r)$, is called a \textbf{rough path} above the signal $X$  {(see definition \ref{def:rough-path})}. 

Rough paths have been extensively applied to stochastic differential equations where $X$ is a stochastic process. Examples of stochastic driving signals include fractional Brownian motion or more general Gaussian processes. However, there is nothing inherently random about rough paths theory. Even in the case of stochastic driving signals, solutions to \eqref{eq:int-eq} are defined pathwise - that is for each realization of the process. Thus even when the driving signal is random, the solution theory can be seen as deterministic. However, in the literature the application of rough paths theory to equations driven by deterministic signals has been limited. This note works out the case when $X$ is a {vector-valued} Weierstrass function.

Given that defining solutions to \eqref{eq:int-eq} can be reduced to defining a rough path (iterated integral) above the driving signal, there has been considerable interest in the construction of rough paths above various signals. If the driving signal is {scalar-valued}, then we can make the simple definition that $\int_s^t (X(r)-X(s))dX(r):=\frac{1}{2}(X(t)-X(s))^2$. This is a completely acceptable definition for any $X$, so the primary interest is in defining rough paths above {vector-valued} signals.

In \cite{Lyons-Victoir}, the authors show that any signal that is $\alpha$-H\"older (or has finite $p$-variation) has a rough path lift. However, their construction is fairly abstract and hence there is a lot of interest in more natural constructions - in particular from natural approximations. {A more natural construction for general $\alpha$-H\"older paths can be found in \cite{Zambotti-Sewing}.} The construction of a rough path above fractional Brownian motion using Volterra's representation is given in \cite{tindel-fbm}. In \cite{Jain-Monrad-Condition}, the authors constructed rough paths above very general Gaussian signals. 

Weierstrass functions are real-valued functions which have been identified as the ``deterministic analogue" of Brownian motion (this is the perspective taken in \cite{SLE-Weierstrass}, for example). In particular, Weierstrass functions have similar analytic properties to the types of stochastic processes that rough paths theory was invented to handle. The analytic properties of Weierstrass functions have been studied extensively. For example, see \cite{MR3255455,MR2896736,MR2952868,MR1452806,MR3606737,MR3803788} for discussion of the Hausdorff dimension of the graphs of Weierstrass functions. Another interesting instance in which a Weierstrass function is used in place of a stochastic process is given in \cite{SLE-Weierstrass}, where a deterministic version of the Schramm–Loewner evolution is considered, driven by a Weistrass function rather than by a Brownian motion.

We emphasize that rough paths theory for {scalar-valued} functions is somewhat trivial in the sense mentioned above. Therefore, we will consider a {vector-valued} Weierstrass function. By this, we mean an $\mathbb R^d$-valued function whose components are {scalar-valued} Weierstrass functions with possibly different parameters. A two dimensional Weierstrass function has been studied in \cite{Imkeller-Weierstrass,Imkeller-Weierstrass-2}. In \cite{Imkeller-Weierstrass}, {Example 2.8}, the authors consider the two dimensional Weierstrass function
$$W: t\mapsto \left(\sum_{k=0}^\infty a_k \cos(2^k \pi t), \sum_{k=0}^\infty a_k \sin(2^k \pi t))\right),$$ 
where they show that the iterated integrals over $(-1,1)$ of the approximating finite sums diverge (therefore the most natural approximation scheme to $W$ will not lead to a rough path above $W$). Furthermore, in \cite{Imkeller-Weierstrass-2} they discuss the smoothness of the SBR measure. 

We consider a similar signal but of any dimension and with only {cosines} (or only {sines}) instead of mixed {cosines} and {sines}. We construct a rough path above a {vector-valued} Weierstrass function without any of the probabilistic technicalities generally present in rough paths theory to demonstrate the power of rough paths theory beyond the setting of stochastic differential equations. 

The main result is the following theorem. See Theorem \ref{theorem:main-full} for a full version.

\begin{theorem}[Main theorem, simplified version]\label{theorem:main}
    Let $0<a_1,...,a_d<1$ and let $b_1,...,b_d\geq 2$ be integers with $a_1b_1>1,..., a_db_d>1$. Let $$W(t)=(W^{1}(t),\ldots,W^d(t))=\left(\sum_{k=0}^\infty a_1^k \cos( b_1^k \pi t),...,\sum_{k=0}^\infty a_d^k \cos(b_d^k \pi t)\right)$$ and let $$W_N(t)=(W^{1}_N(t),\ldots,W^d_N(t))=\left(\sum_{k=0}^N a_1^k \cos( b_1^k \pi t),...,\sum_{k=0}^N a_d^k \cos(b_d^k \pi t)\right).$$ Then for $1\leq i,j\leq d$ and $0\leq s\leq t\leq 1$ the limit $$I^{i,j}(s,t):=\lim_{N\to\infty} \int_s^t (W^{i}_N(r)-W^{i}_N(s))\,dW^{j}_N(r)$$ exists. Furthermore, if we assume that $-\frac{\ln{a_i}}{\ln{b_i}}>1/3$ for $1\leq i\leq d$, then the matrix with entries $I^{i,j}(s,t)$ defines a geometric rough path above $W$.
    
\end{theorem}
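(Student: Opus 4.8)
The plan is to treat the two assertions separately: first to establish existence of the limit $I^{i,j}(s,t)$, and then to verify the algebraic and analytic axioms that promote the pair $(W(t)-W(s),\,I(s,t))$ to a geometric rough path. Throughout I write $\alpha_i=-\ln a_i/\ln b_i$, so that $a_i=b_i^{-\alpha_i}$ and each component $W^i$ is $\alpha_i$-Hölder; since $b_i\geq 2$ and $0<a_i<1$ we have $\alpha_i>0$, and the hypothesis $a_ib_i>1$ is exactly $\alpha_i<1$.

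For existence in the first part, since each $W^i_N$ is a finite trigonometric sum, the integral is an honest Riemann–Stieltjes integral, and I would expand it as the double sum
\[
\sum_{k=0}^N\sum_{l=0}^N a_i^k a_j^l b_j^l \pi \int_s^t \bigl(\cos(b_i^k\pi r)-\cos(b_i^k\pi s)\bigr)\bigl(-\sin(b_j^l\pi r)\bigr)\,dr,
\]
and apply product-to-sum identities to reduce each summand to elementary integrals of $\cos$ and $\sin$ at the combined frequencies $b_i^k\pm b_j^l$. The resulting primitives are bounded by $C/|b_i^k\pm b_j^l|$ away from resonance and controlled directly on resonance (the pairs with $b_i^k=b_j^l$). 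Writing $a_j^l b_j^l=b_j^{(1-\alpha_j)l}$ and $a_i^k=b_i^{-\alpha_i k}$, I would show the non-resonant terms are absolutely summable because the frequency gap dominates the coefficient growth, while the resonant terms — sparse since $b_i,b_j$ are integers — are separately summable. Convergence of the double series as $N\to\infty$ follows.

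The delicate part, and the main obstacle, is to make these estimates quantitative in $|t-s|$, since the same bounds are needed twice: to prove convergence and to obtain the correct Hölder exponent for the level-two object. I expect to split the double sum at each scale according to whether $b_i^k|t-s|$ and $b_j^l|t-s|$ lie below or above $1$: low-frequency blocks are estimated by Taylor expansion, high-frequency blocks by the oscillatory decay of the primitives, and the resonant contribution handled directly. Summing these should yield $|I^{i,j}(s,t)|\leq C|t-s|^{\alpha_i+\alpha_j}$, hence $\leq C|t-s|^{2\alpha}$ with $\alpha=\min_i\alpha_i$. I note that a naive bound is unavailable here, since $dW^j_N$ has no uniform control in $N$, so the cancellation captured by the frequency analysis is essential.

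Once the limit exists with this regularity, the second part is largely a matter of passing to the limit in identities valid for the smooth truncations. Chen's relation $I^{i,j}(s,t)-I^{i,j}(s,u)-I^{i,j}(u,t)=(W^i(u)-W^i(s))(W^j(t)-W^j(u))$ holds for each $W_N$ by additivity of the integral and survives the limit by continuity of $W$. The geometricity identity $I^{i,j}(s,t)+I^{j,i}(s,t)=(W^i(t)-W^i(s))(W^j(t)-W^j(s))$ follows from integration by parts for the smooth $W_N$ and again passes to the limit. Together with the $\alpha$-Hölder bound on $W$, the $2\alpha$-Hölder bound on $I$, and the assumption $-\ln a_i/\ln b_i>1/3$ (i.e.\ $\alpha>1/3$, which places us in the level-two regime where no higher iterated integrals are required), this identifies $(W,I)$ as a geometric $\alpha$-Hölder rough path.
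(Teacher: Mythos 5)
Your overall architecture matches the paper's: expand $I^{i,j}_N$ as a double sum over frequencies $b_i^k$, $b_j^l$, evaluate each elementary integral via product-to-sum identities, separate resonant from non-resonant pairs, and split the double sum at the scales $b_i^k|t-s|\approx 1$, $b_j^l|t-s|\approx 1$ to extract the Hölder exponent. But there is a genuine gap at the heart of the argument: your claim that the non-resonant terms are summable ``because the frequency gap dominates the coefficient growth'' assumes precisely what must be proved. The dangerous terms are the near-resonant pairs with $b_i^k/2<b_j^l<2b_i^k$, where your primitive bound gives a contribution of order $a_i^k a_j^l\, b_j^l/|b_i^k-b_j^l|$. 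The trivial gap $|b_i^k-b_j^l|\geq 1$ only yields $a_i^k a_j^l b_j^l\approx b_i^{(1-\alpha_i-\alpha_j)k}$ along the near-resonant diagonal, which diverges exactly in the regime of interest $\alpha_i+\alpha_j\leq 1$. The paper closes this gap with a theorem of Fel'dman (a consequence of Baker's theory of linear forms in logarithms): for multiplicatively independent integers $b_1,b_2\geq 2$ one has $|b_1^n-b_2^\ell|\geq \max\{b_1^n,b_2^\ell\}/(\max\{3,n,\ell\})^P$. Without this Diophantine input (or the separate, easier treatment of the multiplicatively dependent case $b_1=b^{q_1}$, $b_2=b^{q_2}$), the absolute convergence and the level-two Hölder bound both fail to follow from your estimates.

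A secondary consequence you should note: the polynomial factor $(\max\{3,n,\ell\})^P$ forces an $\varepsilon$-loss, so the method yields $|I^{i,j}(s,t)|\leq C|t-s|^{\alpha_i+\alpha_j-2\varepsilon}$ rather than the clean exponent $\alpha_i+\alpha_j$ you claim, and correspondingly membership in $\mathscr C_g^{\alpha-\varepsilon}$ rather than $\mathscr C_g^{\alpha}$. Your second part (Chen's relation and the symmetry identity passing to the limit, plus $\alpha>1/3$) is sound in outline and is an acceptable alternative to the paper's route, which instead invokes an interpolation theorem to upgrade the uniform Hölder bounds and pointwise convergence of the smooth truncations to convergence in the rough topology; either way one lands in the geometric class only after the slight loss of exponent.
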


Again, we emphasize that we construct the rough path above a {vector-valued} Weierstrass function because for a {scalar-valued} Weierstrass function we can always define $\int_s^t (W(r)-W(s))dW(r):=\frac{1}{2}(W(t)-W(s))^2$. The technical challenge is defining $\int_s^t (W^i(r)-W^i(s))dW^j(r)$ where $W^i$ and $W^j$ are two different one-dimensional Weierstrass functions. For an example of {vector-valued} Weierstrass functions, see Figure \ref{fig:two-Weierstrass-functions}.

{\section*{Acknowledgements}We would like to greatly thank the anonymous referees who in detail gave constructive feedback including new results such as Propositions \ref{proposition:ref-1}, \ref{prop:ref-2} and a strengthening of our main Theorem \ref{theorem:main-full} to include geometric rate of convergence.}

\begin{figure}
    \centering
\includegraphics[width=11cm]{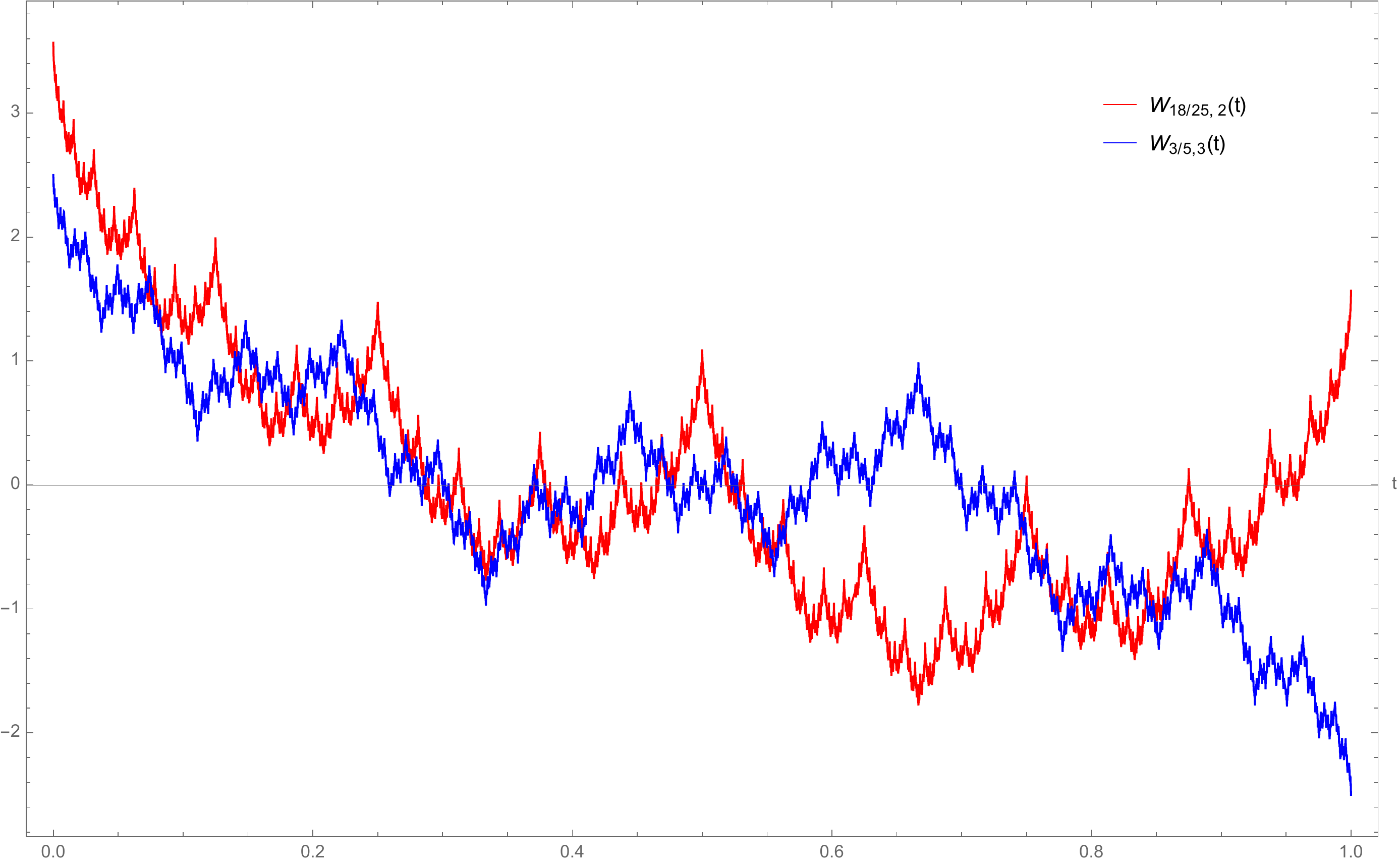}
\includegraphics[width=11cm]{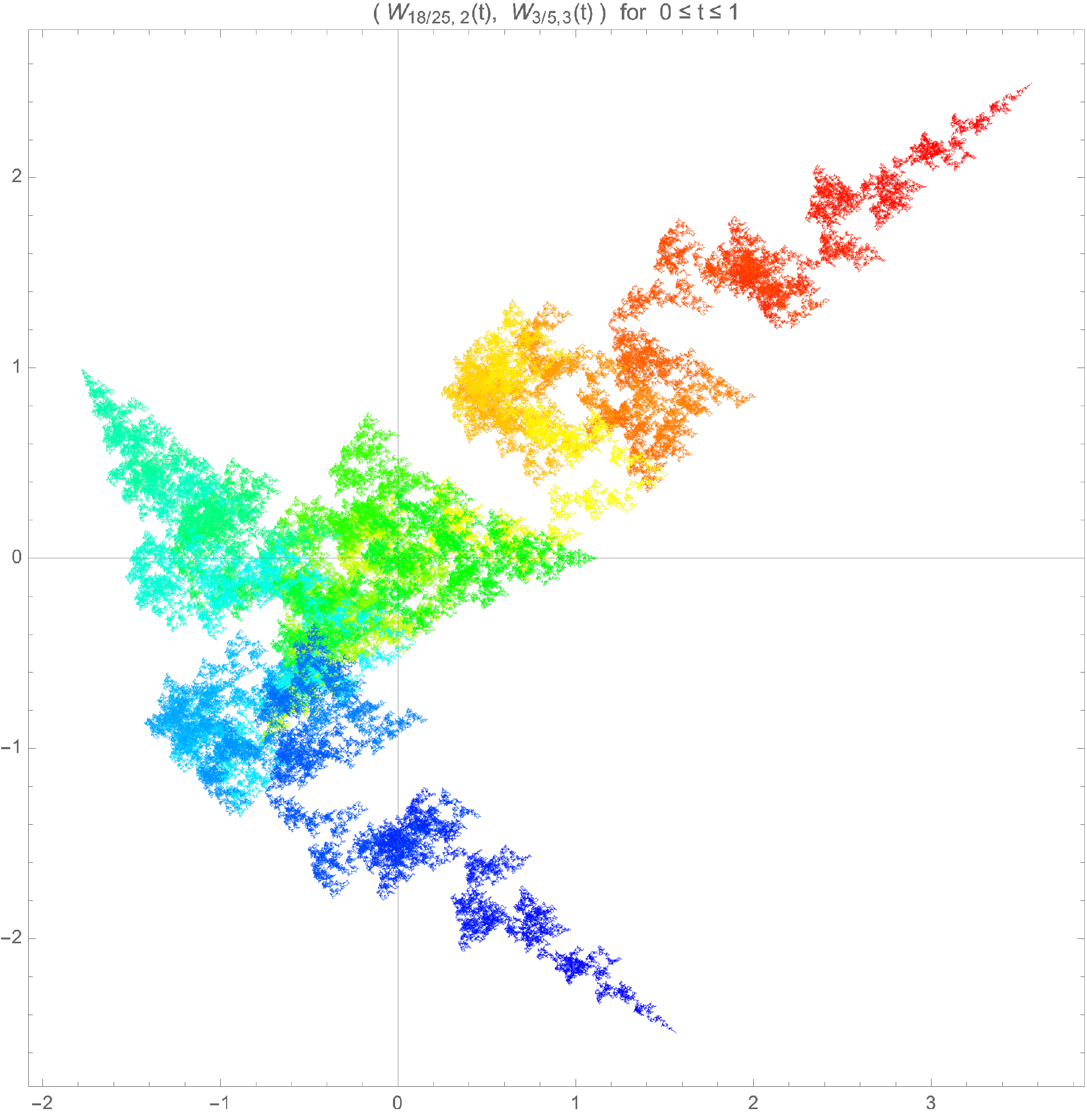}
    \caption{Top panel: two Weierstrass functions with parameters $b_1=2, a_1=\frac{18}{25}$ (in red) and $b_2=3, a_2=\frac{3}{5}$ (in blue) so that $\alpha_1\approx0.473931$ and $\alpha_2\approx0.464974$ are in the range prescribed by Theorems \ref{theorem:main} and \ref{theorem:main-full}. Bottom panel: the curve $[0,1]\to\mathbb{R}^2$ given by $t\mapsto (W_{18/25,2}(t), W_{3/5,3}(t))$.
    Note that on the interval $[1,2]$ the path is the same as for $[0,1]$ but run backwards.
    % whose L\'{e}vy area exists by Theorem \ref{theorem:main-full}.
    }
    \label{fig:two-Weierstrass-functions}
\end{figure}

\section{Background on Rough Paths}
In this section, we give a brief introduction to rough paths theory. See \cite{Friz-Hairer-Book,Friz-Victoir-Book} for two excellent introductions to rough paths.

Rough paths theory treats H\"older continuous driving signals - or in the case of stochastic differential equations, stochastic processes that are almost surely H\"older. We recall the definition of H\"older continuity below.

\begin{definition}\label{def:Holder}
    Let $\alpha\in (0,1]$. We denote by   $C^\alpha([0,T],\mathbb R^d)$ the space of $\alpha$-H\"older functions $f:[0,T]\to \mathbb R^d$ equipped with the norm $\|f\|_\alpha:=\sup_{t\neq s}\frac{|f(t)-f(s)|}{|t-s|^\alpha}.$
\end{definition}
In order to solve equation \eqref{eq:int-eq} through some type of Picard iteration, one must first make sense of integrals of the form 
\begin{equation}\label{eq:int-picard}
    \int_0^t c(X(s),s) dX(s).
\end{equation}
The following result by  Young is classical. 
\begin{theorem}[{Young}  \cite{Young-1936}]
    Let $f\in C^\alpha{([0,T],\mathbb R^d)}$ and let $g\in C^\beta{([0,T],\mathbb R^d)}$ with $\alpha+\beta>1$. Let $\mathcal P_n=\{0=t_0<t_1<...<t_n=T\}$ be a sequence of partitions whose mesh size tends to $0$. Then the limit
    \begin{equation}\label{eq:Young-integral}
        \int_0^T f(s) dg(s)=\lim_{n\to \infty}\sum_{k=0}^n f(t_k)(g(t_{k+1})-g(t_k))
    \end{equation}
    exists and is independent of the sequence of partitions. The integral in equation \eqref{eq:Young-integral} is known as the Young or Riemann-Stieltjes integral.
\end{theorem}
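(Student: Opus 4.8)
The plan is to prove this by the classical Young argument: track how a Riemann sum changes under refinement of the partition, and exploit that the ``second-order'' correction is summable precisely because $\alpha+\beta>1$. Interpreting the pairing $f(t_k)\big(g(t_{k+1})-g(t_k)\big)$ as the Euclidean inner product (equivalently, working componentwise), by bilinearity it suffices to treat real-valued $f,g$, so I take $d=1$ below. First I would introduce the two-parameter increment $\Xi_{s,t}:=f(s)\big(g(t)-g(s)\big)$ and record its defect under insertion of an intermediate point $u\in(s,t)$:
\[
\Xi_{s,t}-\Xi_{s,u}-\Xi_{u,t}=-\big(f(u)-f(s)\big)\big(g(t)-g(u)\big),
\]
so that the H\"older bounds give
\[
\big|\Xi_{s,t}-\Xi_{s,u}-\Xi_{u,t}\big|\le \|f\|_\alpha\|g\|_\beta\,|u-s|^\alpha|t-u|^\beta\le \|f\|_\alpha\|g\|_\beta\,|t-s|^{\alpha+\beta}.
\]
This single estimate, with exponent $\alpha+\beta>1$, is the engine of the entire argument.

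Next I would establish the key one-interval estimate: for any partition $\{s=u_0<u_1<\dots<u_m=t\}$ of $[s,t]$, the Riemann sum $S=\sum_{\ell=0}^{m-1}f(u_\ell)\big(g(u_{\ell+1})-g(u_\ell)\big)$ satisfies $|S-\Xi_{s,t}|\le C\,\|f\|_\alpha\|g\|_\beta\,|t-s|^{\alpha+\beta}$ with $C=2^{\alpha+\beta}\sum_{n\ge1}n^{-(\alpha+\beta)}<\infty$. The mechanism is to delete interior points one at a time: whenever the current partition has $k\ge 2$ subintervals, summing the telescoping double-gaps shows $\sum_i(u_{i+1}-u_{i-1})\le 2(t-s)$, so by pigeonhole some interior point $u_i$ has $u_{i+1}-u_{i-1}\le \tfrac{2}{k-1}(t-s)$. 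Removing $u_i$ changes the sum by exactly the defect $\Xi_{u_{i-1},u_{i+1}}-\Xi_{u_{i-1},u_i}-\Xi_{u_i,u_{i+1}}$, bounded above by $\|f\|_\alpha\|g\|_\beta\big(\tfrac{2}{k-1}(t-s)\big)^{\alpha+\beta}$; iterating from $m$ subintervals down to the trivial partition $\{s,t\}$ and summing over $k$ produces the convergent series $\sum_{n\ge1}n^{-(\alpha+\beta)}$, hence the constant $C$. I expect this to be the main obstacle, since it is where the combinatorial pigeonhole selection and the summability afforded by $\alpha+\beta>1$ must be combined carefully.

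Finally I would deduce convergence and partition-independence. Applying the one-interval estimate on each cell of a partition $\mathcal P$ shows that if $\mathcal Q$ refines $\mathcal P$ then
\[
\big|S(\mathcal Q)-S(\mathcal P)\big|\le C\,\|f\|_\alpha\|g\|_\beta\sum_{[u,v]\in\mathcal P}|v-u|^{\alpha+\beta}\le C\,\|f\|_\alpha\|g\|_\beta\,T\,(\operatorname{mesh}\mathcal P)^{\alpha+\beta-1},
\]
which tends to $0$ as the mesh shrinks because $\alpha+\beta-1>0$. Comparing any two partitions through their common refinement then bounds $|S(\mathcal P_m)-S(\mathcal P_n)|$ by the corresponding mesh terms, so $\big(S(\mathcal P_n)\big)_n$ is Cauchy and the limit exists. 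Since that bound depends on a partition only through its mesh, comparing two mesh-vanishing sequences to their common refinements forces them to the same value, giving independence of the chosen sequence. The $\mathbb R^d$-valued statement then follows by applying the scalar result to each component (equivalently, to the fixed bilinear pairing).
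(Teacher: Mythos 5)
Your proof is correct and complete: the defect identity $\Xi_{s,t}-\Xi_{s,u}-\Xi_{u,t}=-\bigl(f(u)-f(s)\bigr)\bigl(g(t)-g(u)\bigr)$, the pigeonhole point-deletion estimate with constant $2^{\alpha+\beta}\sum_{n\geq 1}n^{-(\alpha+\beta)}$, and the mesh-comparison via common refinements constitute precisely Young's classical argument (equivalently, the sewing lemma specialized to $\Xi_{s,t}=f(s)\bigl(g(t)-g(s)\bigr)$), and since the paper states this theorem without proof, citing \cite{Young-1936}, your argument coincides with the standard proof behind that citation. The only cosmetic remark: the upper summation limit in the paper's display should read $n-1$ rather than $n$ (a typo in the statement, not a flaw in your proof), and your reduction of the $\mathbb{R}^d$-valued pairing to scalar components by bilinearity is the right way to handle the ambiguity in the product $f(t_k)\bigl(g(t_{k+1})-g(t_k)\bigr)$.
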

Therefore if we assume that $c$ is smooth, then the integral in equation \eqref{eq:int-picard} makes sense as a Young integral if $X\in C^\alpha{([0,T],\mathbb R^d)}$ for $\alpha>1/2$. However, for lower regularity driving signals, the Riemann-Stieltjes integral defined in equation \eqref{eq:Young-integral} needs to include extra terms which represent the ``iterated integrals" of $X$ against itself. We provide a heuristic derivation below.

\textbf{Heuristic Derivation of Rough Integral}\textit{ If $X$ is a signal that is $\alpha$-H\"older continuous with $\alpha\in (1/3,1/2]$ and $F$ is a smooth function, then for a partition of $[0,t]$, $\mathcal P=\{0=t_0<...<t_n=t\}$ we have that (the a-priori ill defined) integral
\begin{align*}
    &\int_0^t F(X(r)) dX(r)=\sum_{k=0}^n \int_{t_k}^{t_{k+1}} F(X(r))dX(r)\\
    &=\sum_{k=0}^n \int_{t_k}^{t_{k+1}} {\bigg(}F(X(t_k))+F'(X(t_k))(X(r)-X(t_k))+O(|r-t_k|^{2\alpha}){\bigg)}dX(r)\\
    &=\sum_{k=0}^n \bigg(F(X(t_k))(X(t_{k+1})-X(t_k))+F'(X(t_k))\int_{t_k}^{t_{k+1}}(X(r)-X(t_k)) dX(r)\\
&\qquad\quad+O(|t_{k+1}-t_k|^{3\alpha})\bigg).
\end{align*}
As $3\alpha>1$, the remainder term should tend to $0$ as the mesh size of the partition $|\mathcal P|=\displaystyle\max_{0\leq k\leq n-1}(t_{k+1}-t_k)$ tends to $0$. \\\\
Note that we restricted $\alpha\in (1/3,1/2]$. This is purely for exposition - for lower $\alpha$ we would need more iterated integrals. In particular, if $\alpha\in (\frac{1}{n+1},\frac{1}{n}]$ then we need $n-1$ additional terms so that $\alpha(n+1)>1$. In the rest of this note, we will stick to $\alpha\in (1/3,1/2]$ for convenience.} 

This heuristic reduces the problem of defining the integral $\int_0^t F(X(r))dX(r)$ to just defining $\int_{t_k}^{t_{k+1}}(X(r)-X(t_k)) dX(r)$. However, if $X$ is irregular then the iterated integral does not exist ``canonically" as a Riemann-Stieltjes integral and therefore must be \textit{postulated}. There are two main properties we would like such a definition to satisfy - one analytic and one algebraic.

A rough path above a signal $X\in C^\alpha([0,T],\mathbb R^d)$ is therefore a pair $\mathbf{X}_{s,t}=(X_{s,t}, \mathbb X_{s,t})$ where $X_{s,t}=X(t)-X(s)$ is the increment of $X$ and $\mathbb X_{s,t}$ is a \textit{definition} or \textit{postulation} of the iterated integral $\int_{s}^{t}(X(r)-X(s)) \otimes dX(r)$ where again we note that $\int_{s}^{t}(X(r)-X(s)) \otimes dX(r)$ represents the matrix whose $(i,j)$th entry is $\int_{s}^{t}(X^i(r)-X^i(s))  dX^j(r)$. We give a precise definition below.

\begin{definition}\label{def:rough-path}
    Let $T>0$ and let $\Delta_2^{(0,T)}=\{(s,t):0\leq s\leq t\leq T\}$ denote the $2$-simplex. Given a signal $X\in C^\alpha([0,T], \mathbb R^d)$ with $\alpha\in (1/3,1/2]$ we say $\mathbf X=(X, \mathbb X):\Delta_2^{(0,T)}\to \mathbb R^d\oplus \mathbb R^{d\times d}$ is a \textbf{rough path} above $X$ if for all $s,u,t\in [0,T]$ we have
    \begin{align*}
    &(i) \qquad X_{s,t}=X(t)-X(s)\\
        &(ii) \qquad\mathbb X_{s,t}-\mathbb X_{s,u}-\mathbb X_{u,t}=X_{s,u}\otimes X_{u,t}\\
        &(iii)\qquad \|\mathbf X\|_{\alpha, 2\alpha}:=\sup_{t\neq s}\frac{|X_{s,t}|}{|t-s|^\alpha}+\sup_{t\neq s}\frac{|\mathbb X_{s,t}|}{|t-s|^{2\alpha}}<+\infty,
    \end{align*}
where $|\cdot|$ denotes any of the (equivalent) norms on either $\mathbb R^d$ or $\mathbb R^{d\times d}.$ We denote the space of rough paths by $\mathscr C^{\alpha}$. The topology generated by the seminorm $\|\cdot\|_{\alpha, 2\alpha}$ is called the \textbf{rough topology}.
\end{definition}

\begin{remark}
    It should be stressed that in Definition \ref{def:rough-path} there is no reference to any iterated integrals. The second order process $\mathbb X$ intuitively encodes a notion of iterated integral but formally, $\mathbb X$ is just some process that takes values in $\mathbb R^{d\times d}$. Once we have this purely abstract object $\mathbb X$ we may make the definition
    \begin{equation}
        \int_s^t (X(r)-X(s))\otimes dX(r) := \mathbb X_{s,t}.
    \end{equation}
\end{remark}

\begin{remark}\label{remark:Levy-area}
    The difference of iterated integrals
    $$L^{i,j}(s,t):=\int_s^t X^i(r) dX^j(r)-\int_s^t X^j(r) dX^i(r)$$
    is often referred to as the \textbf{L\'evy area}. %This is due to Stokes theorem. 
    The reason for this terminology is that if $X^i$ and $X^j$ were piecewise smooth, then by Green-Stokes theorem, the (signed) area swept by the curve $(X^i, X^j)$ in $\mathbb R^2$ from time $s$ to time $t$ would be twice $L^{i,j}(s,t)$.
\end{remark}

One can check that if $X\in C^\infty$ and $\int_s^t (X(r)-X(s)) dX(r)$ is the Riemann-Stieltjes integral, then $\mathbf{X}_{s,t}:=\left(X(t)-X(s),\int_{s}^{t}(X(r)-X(s)) dX(r)\right)$ satisfies Definition \ref{def:rough-path}. In this sense, there is a natural embedding of $C^{\infty}$ smooth functions into the space of rough paths. The closure of these functions under the rough topology is the space of geometric rough paths.

\begin{definition}
    For $\alpha\in (1/3,1/2]$, denote by $\mathring{\mathscr C}_g^{\alpha}$ the image of the embedding $\iota: C^\infty([0,T],\mathbb R^d)\to\mathscr C^\alpha$ where $\iota(f)_{s,t}=\left(f(t)-f(s),\int_s^t (f(r)-f(s))f'(r) dr\right)$. Let $\mathscr C_g^\alpha$ be the closure of $\mathring{\mathscr C}_g^{\alpha}$ under the seminorm $\|\cdot \|_{\alpha, 2\alpha}$ defined in Definition \ref{def:rough-path}. We call $\mathscr C_g^\alpha$ the space of $\textbf{geometric rough paths}$. 
\end{definition}

\begin{remark}
    The space of rough paths $\mathscr C^\alpha$, although a subset of the vector space $\mathbb R^d \oplus \mathbb R^{d\times d}$ is not itself a vector space because of the nonlinear relation (ii) in Definition \ref{def:rough-path}. However, the seminorm $\|\cdot\|_{\alpha,2\alpha}$ can be used to define a metric - thus $\mathscr C^\alpha$ is a metric space. 
\end{remark}

We can transfer pointwise convergent sequence of approximations to a rough path to convergence in the rough topology using the following ``interpolation" theorem.

\begin{theorem}\label{theorem:interpolation}(see \cite{Friz-Hairer-Book}, Exercise 2.9)
    Let $(X_N, \mathbb X_N)\in \mathscr C^\beta$ for some $\beta\in (1/3,1,2]$ and every $N\geq1$. Suppose that the following uniform bounds hold
    {\begin{align*}
        \sup_N \|X_N\|_\beta &<\infty,\\
        \sup_N \|\mathbb X_N\|_{2\beta} &<\infty.
    \end{align*}}
    Suppose that, as $N\to\infty$,  $X_N(t)\to X(t)$ %pointwise  for all $t$ 
    and $\mathbb X_N(0,t)\to \mathbb X(0,t)$ pointwise for all $t$. Then $(X,\mathbb X)\in \mathscr C^\beta$ and $(X_N,\mathbb X_N)$ converges to $(X, \mathbb X)$ in $\mathscr C_g^\alpha$ as $N\to\infty$ for all $1/3<\alpha<\beta$.
\end{theorem}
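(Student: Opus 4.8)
The plan is to separate the statement into two independent pieces: first, verifying that the limiting pair $(X,\mathbb X)$ is genuinely a $\beta$-rough path in the sense of Definition \ref{def:rough-path}; and second, upgrading the hypothesized pointwise convergence to convergence in the stronger $\|\cdot\|_{\alpha,2\alpha}$-seminorm for every $\alpha<\beta$ by an interpolation argument. Throughout I would write $M:=\sup_N\|X_N\|_\beta<\infty$ and $C:=\sup_N\|\mathbb X_N\|_{2\beta}<\infty$ for the uniform bounds.

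First I would promote the based-at-zero convergence $\mathbb X_N(0,t)\to\mathbb X(0,t)$ to convergence $\mathbb X_N(s,t)\to\mathbb X_{s,t}$ at every pair $(s,t)$. Since each $\mathbb X_N$ satisfies Chen's relation (ii), for $0\le s\le t$ one has $\mathbb X_N(s,t)=\mathbb X_N(0,t)-\mathbb X_N(0,s)-X_N(0,s)\otimes(X_N(t)-X_N(s))$, and every term on the right converges by hypothesis; this both defines $\mathbb X_{s,t}$ and shows $\mathbb X_N(s,t)\to\mathbb X_{s,t}$ pointwise on $\Delta_2^{(0,T)}$. Condition (i) holds by the definition of $X$; the algebraic relation (ii) passes to the pointwise limit since it is an identity satisfied for each $N$; and the analytic bound (iii) follows by pushing the uniform estimates $|X_N(t)-X_N(s)|\le M|t-s|^\beta$ and $|\mathbb X_N(s,t)|\le C|t-s|^{2\beta}$ through the pointwise limit, giving $\|X\|_\beta\le M$ and $\|\mathbb X\|_{2\beta}\le C$. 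Hence $(X,\mathbb X)\in\mathscr C^\beta$.

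The heart of the matter is the convergence statement. Writing $Y_N:=X_N-X$ and $\mathbb Y_N:=\mathbb X_N-\mathbb X$, we have $\sup_N\|Y_N\|_\beta<\infty$, $\sup_N\|\mathbb Y_N\|_{2\beta}<\infty$, and both tend to $0$ pointwise. The interpolation device is a two-scale split governed by a threshold $\delta>0$: on pairs with $|t-s|\le\delta$ I would estimate $|Y_N(t)-Y_N(s)|/|t-s|^\alpha\le\|Y_N\|_\beta\,|t-s|^{\beta-\alpha}\lesssim\delta^{\beta-\alpha}$, and analogously $|\mathbb Y_N(s,t)|/|t-s|^{2\alpha}\lesssim\delta^{2(\beta-\alpha)}$ at the second level, both small uniformly in $N$; while on pairs with $|t-s|>\delta$ I would bound by $2\|Y_N\|_\infty/\delta^\alpha$ and $\sup_{\Delta_2}|\mathbb Y_N|/\delta^{2\alpha}$, which are small once $N$ is large provided the corresponding sup-norms vanish. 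Choosing $\delta$ first to kill the small-scale term, then $N$ large, yields $\|Y_N\|_\alpha\to0$ and $\|\mathbb Y_N\|_{2\alpha}\to0$, i.e. $\|(X_N,\mathbb X_N)-(X,\mathbb X)\|_{\alpha,2\alpha}\to0$.

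The step requiring real work, and the main obstacle, is justifying that $\|Y_N\|_\infty$ and $\sup_{\Delta_2}|\mathbb Y_N|$ actually vanish: pointwise convergence alone does not control these uniformly over the continuum of pairs $\{|t-s|\ge\delta\}$. For this I would invoke Arzelà--Ascoli, converting equicontinuity plus pointwise convergence into uniform convergence on the compact simplex. At the first level equicontinuity is immediate from the uniform $\beta$-Hölder bound on $Y_N$. At the second level it is more delicate, because $\mathbb Y_N$ does not satisfy Chen's relation: subtracting the two Chen identities produces a defect, so that, for example, $\mathbb Y_N(s,t')-\mathbb Y_N(s,t)=\mathbb Y_N(t,t')+\big(X_N(s,t)\otimes X_N(t,t')-X(s,t)\otimes X(t,t')\big)$, where I would bound the first summand by $(\sup_N\|\mathbb X_N\|_{2\beta}+\|\mathbb X\|_{2\beta})|t'-t|^{2\beta}$ and the cross term by a constant multiple of $|t'-t|^\beta$ via the first-level bounds, obtaining a modulus of continuity uniform in $N$ (and symmetrically in the $s$-variable). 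This gives equicontinuity of $\{\mathbb Y_N\}$ and hence the required uniform convergence. Finally, since the approximants $(X_N,\mathbb X_N)$ in our application are canonical lifts of smooth paths and thus lie in $\mathscr C_g^\alpha$, which is closed under $\|\cdot\|_{\alpha,2\alpha}$, the convergence just established places the limit $(X,\mathbb X)$ in $\mathscr C_g^\alpha$, as claimed.
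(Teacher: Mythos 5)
The paper itself contains no proof of this theorem---it is imported verbatim from \cite{Friz-Hairer-Book}, Exercise 2.9---and your argument is precisely the intended solution: recover $\mathbb X_N(s,t)$ from the based values via Chen's relation, pass the uniform H\"older bounds to the pointwise limit to get $(X,\mathbb X)\in\mathscr C^\beta$, and then interpolate with a two-scale split, using the uniform $\beta$-bounds at scales $|t-s|\le\delta$ and sup-norm convergence at scales $|t-s|>\delta$, where the sup-norm convergence at both levels follows correctly from pointwise convergence plus the equicontinuity you extract (for $\mathbb Y_N$, via the Chen-defect computation, which is exactly right). Your closing observation also resolves the one genuine looseness in the statement as printed: with no geometricity hypothesis on the $(X_N,\mathbb X_N)$ the conclusion can only be convergence in $\mathscr C^\alpha$ (the constant sequence given by the It\^o lift of a Brownian path satisfies every hypothesis yet is not geometric), and the limit lands in $\mathscr C_g^\alpha$ for the reason you give---in this paper's application the approximants are canonical lifts of the smooth truncations $W_N$, hence lie in $\mathring{\mathscr C}_g^{\alpha}$, and $\mathscr C_g^\alpha$ is by definition closed under $\|\cdot\|_{\alpha,2\alpha}$.
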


\section{Weierstrass Function}

Let $b\geq 2$ be an integer and  $0<a<1$ with $ab>1$. 
% $ab>1+\frac32\pi$, 
Define the ({scalar-valued}) Weierstrass function by 
\begin{equation}
    W_{a,b}(t):=\sum_{n=0}^\infty a^{n} \cos(b^n \pi t).\label{def-W-a-b}
\end{equation}
In 1872 Weierstrass proved that $W_{a,b}$ is nowhere differentiable when $b$ is odd and $ab>1+\frac{3\pi}{2}$. The range of parameters $a,b$ for which the nondifferentiability holds was enlarged by several authors, culminating in the work of Hardy \cite{Hardy}. For the parameters we are considering, Hardy proved that $W_{a,b}$ is nowhere differentiable and $\alpha$-H\"{o}lder continuous with $\alpha=-\frac{\ln{a}}{\ln{b}}$ (\cite{Hardy}, Theorems 1.31 and 1.32). That is, according to Definition \ref{def:Holder},  $W_{a,b}\in C^{\alpha}([0,T],\mathbb R)$ for every $T>0$.
With $\alpha$ as above, we write the equivalent representation
%Letting $\alpha = -\frac{\ln(a)}{\ln(b)}$ gives the equivalent representation
\begin{equation}
    W_{b,\alpha }(t):=\sum_{n=0}^\infty b^{-n\alpha} \cos(b^n \pi t).\label{def-W-b-alpha}
\end{equation}
Also define its truncated version
\begin{equation}
    W_{b,\alpha,N}(t):=\sum_{n=0}^N b^{-n\alpha}\cos(b^n \pi t).
\end{equation}

Even though it is well known that the Weierstrass function is H\"older continuous, for expository reasons we will include a short proof here.

In the proof of Proposition \ref{prop:Holder-continuity} and Proposition \ref{prop:uniform-bound-Levy-area}, we will write ``$\lesssim$'' to denote ``$\leq K $'' where $K$ is some unspecified absolute constant whose value may change from line to line. This is equivalent to Vinogradov's ``$\ll$'' notation.

For simplicity, instead of working on an arbitrary interval $[0,T]$, we henceforth set $T=1$.

\begin{proposition}\label{prop:Holder-continuity}
    Let {$W_{b,\alpha}$} be a Weierstrass function. Then {$W_{b,\alpha}\in C^\alpha([0,1],\mathbb R)$} for $\alpha=-\log_b(a)$.
\end{proposition}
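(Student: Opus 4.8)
The plan is to estimate the increment $W_{b,\alpha}(t)-W_{b,\alpha}(s)$ by splitting the defining series at a frequency cutoff that depends on $h:=|t-s|$. Writing the increment termwise as $\sum_{n=0}^\infty b^{-n\alpha}\bigl(\cos(b^n\pi t)-\cos(b^n\pi s)\bigr)$, I would choose an integer $N=N(h)$ with $b^N\asymp 1/h$ (concretely $N=\lfloor \log_b(1/h)\rfloor$) and treat the low-frequency block $n\le N$ and the high-frequency tail $n>N$ by two complementary bounds. It suffices to prove the estimate for $0<h\le 1$, since $W_{b,\alpha}$ is bounded on $[0,1]$ and hence the Hölder ratio is automatically controlled when $h$ is bounded away from $0$.

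For the low-frequency block I would exploit smoothness of each individual cosine: by the mean value theorem $|\cos(b^n\pi t)-\cos(b^n\pi s)|\le \pi b^n h$. Summing gives
\[
\sum_{n=0}^N b^{-n\alpha}\,\pi b^n h=\pi h\sum_{n=0}^N b^{n(1-\alpha)}\lesssim h\, b^{N(1-\alpha)},
\]
where the geometric sum is dominated by its largest term because $1-\alpha>0$ (which follows from $a<1$). With $b^N\asymp 1/h$ this block is $\lesssim h\cdot h^{-(1-\alpha)}=h^\alpha$.

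For the high-frequency tail I would instead use the crude uniform bound $|\cos(b^n\pi t)-\cos(b^n\pi s)|\le 2$, so that
\[
\sum_{n=N+1}^\infty 2\,b^{-n\alpha}\lesssim b^{-N\alpha},
\]
where now the geometric sum is dominated by its smallest term because $\alpha>0$ (which follows from $ab>1$). Again $b^N\asymp 1/h$ turns this into $\lesssim h^\alpha$. Adding the two blocks yields $|W_{b,\alpha}(t)-W_{b,\alpha}(s)|\lesssim h^\alpha=|t-s|^\alpha$, which is exactly the claimed membership in $C^\alpha([0,1],\mathbb R)$.

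There is no serious obstacle here; the argument is the classical frequency-splitting estimate for lacunary series. The only point requiring care is the choice of the cutoff $N$: it must be taken so that the two geometric sums balance, i.e.\ so that $h\,b^{N(1-\alpha)}$ and $b^{-N\alpha}$ are both of order $h^\alpha$, and this is precisely what $b^N\asymp 1/h$ achieves. The two sign conditions $1-\alpha>0$ and $\alpha>0$ are exactly what make the low block geometrically increasing (dominated by the top term) and the high block geometrically decreasing (dominated by the bottom term), so the admissible parameter range $0<\alpha<1$ is used in an essential way.
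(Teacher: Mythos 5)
Your proof is correct and follows essentially the same argument as the paper: the same frequency cutoff $b^{-(N+1)}<|t-s|\leq b^{-N}$, the Lipschitz bound on the low-frequency block, and the trivial bound of $2$ on the tail, each summing to $O(|t-s|^\alpha)$. No substantive differences.
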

\begin{proof}
Consider some pair $(s,t)$ and choose $N$ so that $b^{-(N+1)}<|t-s|\leq b^{-N}$. Then we have that
\begin{align*}
    &|W_{b,\alpha}(t)-W_{b,\alpha}(s)|\leq \sum_{n=0}^\infty b^{-n\alpha} |\cos(b^n\pi t)-\cos(b^n\pi s)|\\
    &= \sum_{n=0}^N b^{-n\alpha} |\cos(b^n\pi t)-\cos(b^n\pi s)|+\sum_{n=N+1}^\infty b^{-n\alpha} |\cos(b^n\pi t)-\cos(b^n\pi s)|\\
    &\leq \sum_{n=0}^N b^{-n\alpha} b^n \pi |t-s|+\sum_{n=N+1}^\infty b^{-n\alpha} 2\\
    &\lesssim b^{(1-\alpha)(N+1)}b^{-N}+b^{-\alpha N}\\
    &\lesssim b^{-\alpha N}\\
    &\lesssim |t-s|^\alpha.
\end{align*}
\end{proof}
The primary technical barrier to constructing a rough path above a {vector-valued} {function} is the uniform bound on the H\"older norm of the iterated integrals of the approximating sequence. Thus, the following Proposition \ref{prop:uniform-bound-Levy-area} is the primary technical contribution of this paper. We use a similar idea of the proof of Proposition \ref{prop:Holder-continuity} of splitting the sum into a bulk and a tail. However, as there is a double sum we will need to apply two cutoffs and four separate bounds. The bound involving the tail of each sum is the most interesting, and involves various cases depending on whether $b_1, b_2$ are multiplicatively independent or not, see bound \textbf{(iv)} below. 

\begin{proposition}\label{prop:uniform-bound-Levy-area} Let $W_{b_1,\alpha_1}$ and $W_{b_2,\alpha_2}$ be two Weierstrass functions with truncated versions $W_{b_1,\alpha_1,N}$ and $W_{b_2,\alpha_2,N}$. Let $\varepsilon>0$ be such that $\varepsilon<\min\{\alpha_1,\alpha_2\}$. 
Define the approximating sequence of iterated integrals as 
\begin{equation}\label{I^N_(b1,alpha1)(b2,alpha2)}
  I^N(s,t)=I_{(b_1,\alpha_1),(b_2,\alpha_2)}^N(s,t):=\int_s^t (W_{b_1,\alpha_1,N}(r)-W_{b_1,\alpha_1,N}(s))\, d W_{b_2,\alpha_2,N}(r).
\end{equation}
Then there exists some constant $C:=C(\alpha_1,\alpha_2,b_1,b_2,\varepsilon)$, independent of $N$ so that
\begin{equation}\label{eq:bound-on-I-1}
    \left| I^N(s,t)\right|\leq C |t-s|^{\alpha_1+\alpha_2-2\varepsilon},
\end{equation}
for all $0\leq s\leq t\leq 1.$
\end{proposition}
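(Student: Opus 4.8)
The plan is to write $I^N(s,t)$ explicitly as a finite double trigonometric sum and then estimate it term by term after splitting each summation index at a scale dictated by $|t-s|$. Differentiating the truncated series gives $I^N(s,t)=-\pi\sum_{m=0}^N\sum_{n=0}^N b_1^{-m\alpha_1}\,b_2^{n(1-\alpha_2)}\,J_{m,n}$, where $J_{m,n}:=\int_s^t(\cos(b_1^m\pi r)-\cos(b_1^m\pi s))\sin(b_2^n\pi r)\,dr$. The weight $b_2^{n(1-\alpha_2)}$ grows with $n$, so the tail of the $n$-sum is the dangerous part, and everything hinges on having sharp enough bounds for $J_{m,n}$. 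I would assemble a small toolkit of elementary bounds, always keeping the constant $\cos(b_1^m\pi s)$ inside the integrand so that the integrand vanishes at $r=s$; this preserves the cancellation that makes $I^N$ of size $|t-s|^{\alpha_1+\alpha_2}$ rather than merely $|t-s|^{\alpha_2}$. The four bounds are: (a) the trivial $|J_{m,n}|\lesssim|t-s|$; (b) the Lipschitz bound $|J_{m,n}|\lesssim b_1^m|t-s|^2$ from $|\cos(b_1^m\pi r)-\cos(b_1^m\pi s)|\le b_1^m\pi|t-s|$; (c) an integration-by-parts bound $|J_{m,n}|\lesssim b_1^m|t-s|/b_2^n$ that exploits the oscillation of $\sin(b_2^n\pi r)$ while using $f(s)=0$ to kill the boundary term at $s$; and (d) a product-to-sum bound $|J_{m,n}|\lesssim(b_1^m+b_2^n)^{-1}+|b_1^m-b_2^n|^{-1}+(b_2^n)^{-1}$, valid when $b_1^m\neq b_2^n$.

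Next I would introduce the two cutoffs $M^*$ and $N^*$ defined by $b_1^{M^*}\approx b_2^{N^*}\approx|t-s|^{-1}$, splitting each sum into a bulk part (index $\le$ cutoff) and a tail part (index $>$ cutoff), hence splitting the double sum into four regions. On the bulk-bulk region I would use (b): the geometric sums $\sum_{m\le M^*}b_1^{m(1-\alpha_1)}\approx|t-s|^{-(1-\alpha_1)}$ and $\sum_{n\le N^*}b_2^{n(1-\alpha_2)}\approx|t-s|^{-(1-\alpha_2)}$ combine with the factor $|t-s|^2$ to give exactly $|t-s|^{\alpha_1+\alpha_2}$. On the region where $m$ is bulk and $n$ is tail I would use (c); since $b_1^m\lesssim|t-s|^{-1}$ there, the summations again produce $|t-s|^{\alpha_1+\alpha_2}$ with no loss. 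These two regions are routine. The work lies in the regions where the $m$-index is in the tail, because there neither (b) nor (c) is summable (both carry the growing factor $b_1^{m(1-\alpha_1)}$), so I am forced to extract oscillation from $\cos(b_1^m\pi r)$ via bound (d), which reintroduces the denominator $|b_1^m-b_2^n|$.

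The heart of the argument is the tail-tail region (bound \textbf{(iv)} in the statement). Here I would use (d) and split according to the relative sizes of $b_1^m$ and $b_2^n$. When the two differ by a factor of two, $|b_1^m-b_2^n|\gtrsim\max(b_1^m,b_2^n)$ and the geometric series again sum to $|t-s|^{\alpha_1+\alpha_2}$. The delicate terms are the near-diagonal ones, $b_1^m\asymp b_2^n\asymp B$ with $B\gtrsim|t-s|^{-1}$, of which there are only $O(1)$ per scale. The key observation is that $b_1^m$ and $b_2^n$ are integers, so $b_1^m-b_2^n$ is an integer and $|b_1^m-b_2^n|\ge1$ whenever it is nonzero. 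However, $|b_1^m-b_2^n|\ge1$ alone is \emph{not} enough when $\alpha_1+\alpha_2<1$ (the relevant regime for rough paths): the near-diagonal sum $\sum_B B^{1-\alpha_1-\alpha_2}$ would diverge. This is precisely where multiplicative (in)dependence of $b_1,b_2$ enters. If $b_1,b_2$ are multiplicatively dependent I would handle the exact resonances $b_1^m=b_2^n$ directly, where $\int_s^t\cos(b_1^m\pi r)\sin(b_1^m\pi r)\,dr=\tfrac12\int_s^t\sin(2b_1^m\pi r)\,dr\lesssim(b_1^m)^{-1}$, and off resonance use the elementary gap $|b_1^m-b_2^n|\ge\tfrac12\max(b_1^m,b_2^n)$ for distinct powers of a common base. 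If $b_1,b_2$ are multiplicatively independent I would invoke a lower bound for linear forms in two logarithms (Baker's theorem): for every $\varepsilon>0$ there is $c_\varepsilon>0$ with $|b_1^m-b_2^n|\ge c_\varepsilon\max(b_1^m,b_2^n)^{1-\varepsilon}$. Feeding this into the near-diagonal sum gives $\sum_{B\gtrsim|t-s|^{-1}}B^{-(\alpha_1+\alpha_2)+\varepsilon}\approx|t-s|^{\alpha_1+\alpha_2-\varepsilon}$; allowing a comparable loss at the resonant corner where the tail meets the bulk, and being slightly generous when converting the polylogarithmic Diophantine defect into a power of $|t-s|$, yields the stated exponent $\alpha_1+\alpha_2-2\varepsilon$.

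I expect the Diophantine estimate in the multiplicatively independent near-diagonal case to be the main obstacle: it is the only point at which the crude integrality bound $|b_1^m-b_2^n|\ge1$ fails to close the argument for $\alpha_1+\alpha_2<1$, and it is the sole source of the $\varepsilon$-loss, since the quantitative lower bound on $|b_1^m-b_2^n|$ carries an unavoidable sub-polynomial (polylogarithmic) defect that I would absorb into $|t-s|^{-\varepsilon}$. The remaining work---verifying the geometric-series exponents in the three routine regions, and checking the edge case where $|t-s|$ is so small that a cutoff exceeds the truncation level $N$, in which case the relevant sum is entirely ``bulk''---is straightforward, but must be carried out uniformly in $N$ so that the constant $C$ is genuinely independent of $N$.
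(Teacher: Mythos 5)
Your proposal is essentially the paper's proof: the same expansion of $I^N$ into a double trigonometric sum, the same four elementary bounds on the inner integrals (yours (a)--(d) are the paper's bounds (i)--(iv) up to the normalization factor $\pi b_2^{n}$ you pulled out of the integrator), the same two cutoffs at scale $|t-s|^{-1}$ producing four regions, and the same Diophantine input in the tail--tail region: the paper invokes Fel'dman's theorem, which gives precisely the lower bound $|b_1^m-b_2^n|\gtrsim_{\varepsilon}\max(b_1^m,b_2^n)^{1-\varepsilon}$ that you attribute to Baker, and it treats the resonant case $b_1^m=b_2^n$ and the multiplicatively dependent case exactly as you describe; your diagnosis that integrality alone fails when $\alpha_1+\alpha_2<1$ and that the $\varepsilon$-loss comes solely from this step matches the paper. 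One correction: in the region where $m$ is in the tail but $n$ is in the bulk you say you are ``forced'' to use the product-to-sum bound (d); that bound does not close there, because its $(b_2^n)^{-1}$ term contributes $\sum_{m>M^*}b_1^{-m\alpha_1}\sum_{n\le N^*}b_2^{-n\alpha_2}\approx|t-s|^{\alpha_1}$, which is far larger than $|t-s|^{\alpha_1+\alpha_2-2\varepsilon}$. The correct tool in that region is your trivial bound (a), $|J_{m,n}|\lesssim|t-s|$, which yields $b_1^{-M^*\alpha_1}\,b_2^{N^*(1-\alpha_2)}\,|t-s|\approx|t-s|^{\alpha_1+\alpha_2}$; this is exactly the paper's bound (ii) applied to its region $\mathbf{II}$. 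With that substitution your argument coincides with the paper's.
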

We will need the following result due to N.I. Fel'dman.
\begin{lemma}[\cite{Bugeaud}, Theorem 1.8]\label{lem:Feldman}
Let $b_1, b_2\geq 2$ be two integers that are multiplicatively independent (i.e. $\ln{(b_1)}/\ln{(b_2)}$ is irrational). Let $n,\ell\geq1$  be two integers. Then there exists a constant $P>0$, depending only on $b_1,b_2$ such that
$$|b_1^n-b_2^\ell|\geq \frac{\max\{b_1^n,b_2^\ell\}}{(\max\{3,n,\ell\})^P}.$$
\end{lemma} 
    
\begin{proof}[Proof of Proposition \ref{prop:uniform-bound-Levy-area}]
Recalling that $\alpha_i=-\frac{\ln(a_i)}{\ln(b_i)}$ for $i=1,2$, we may rewrite $I^N(s,t)$ as 
\begin{equation}
    I^N(s,t)=\sum_{n=0}^N\sum_{\ell=0}^N a_1^{n} a_2^{\ell}\, J^{n,\ell}(s,t),
\end{equation}
where
\begin{equation}
    J^{n,\ell}(s,t):= \int_s^t \left(\cos(b_1^n \pi r)-\cos(b_1^n \pi s)\right) d\cos(b_2^\ell \pi r).
\end{equation}

We will need four bounds for $J$. 

\textbf{Bound (i)}.
Differentiating the integrator of $J^{n,\ell}(s,t)$ gives that
\begin{align*}
    |J^{n,\ell}(s,t)|&=\left|\int_s^t \left(\cos(b_1^n \pi r)-\cos(b_1^n \pi s)\right)  \pi b_2^\ell \sin(b_2^\ell \pi r)\, dr\right|\\
    &\leq \int_s^t \left|\left(\cos(b_1^n \pi r)-\cos(b_1^n \pi s)\right)  \pi b_2^\ell\sin(b_2^\ell \pi r)\right| dr\\
    &{\lesssim  b_1^n b_2^\ell \int_s^t (r-s)\, dr}\\
    &\lesssim b_1^n b_2^\ell (t-s)^2,
\end{align*}
where the second to last inequality is due to the Lipschitz property of {cosine} and the trivial bound $|\sin(x)|\leq 1$.

\textbf{Bound (ii)}.
Using the trivial bounds $|\sin(x)|,|\cos(x)|\leq 1$ we have
\begin{align*}
     |J^{n,\ell}(s,t)| &\leq \int_s^t \left|\left(\cos(b_1^n \pi r)-\cos(b_1^n \pi s)\right)  \pi b_2^\ell\sin(b_2^\ell \pi r)\right| dr\\
    &\lesssim b_2^\ell (t-s).
\end{align*}

\textbf{Bound (iii)}. We can apply integration by parts to the Riemann-Stieltjes integral $J$ to get that 
\begin{align*}
    J^{n,\ell}(s,t)=&\left(\cos(b_1^n \pi t)-\cos(b_1^n \pi s)\right)\cos(b_2^\ell \pi t)-\left(\cos(b_1^n \pi s)-\cos(b_1^n \pi s)\right)\cos(b_2^\ell \pi s)\\
    &-\int_s^t \cos(b_2^\ell \pi r)\, d\cos(b_1^n \pi r)\\
    =&\left(\cos(b_1^n \pi t)-\cos(b_1^n \pi s)\right)\cos(b_2^\ell \pi t)-\int_s^t \cos(b_2^\ell \pi r)\, d\cos(b_1^n \pi r)\\
    =&\left(\cos(b_1^n \pi t)-\cos(b_1^n \pi s)\right)\cos(b_2^\ell \pi t){+}b_1^n \pi \int_s^t \cos(b_2^\ell \pi r) \sin(b_1^n \pi r) dr.
\end{align*}
By the triangle inequality, Lipschitz property of {cosine} and the bounds $|\cos(x)|\leq 1,|\sin(x)|\leq 1$ we have that
\begin{align*}
     |J^{n,\ell}(s,t)|&\lesssim b_1^n (t-s).
\end{align*}

\textbf{Bound (iv)}. We will split this into three cases. First, when $b_1^n=b_2^\ell$. Second, when $b_1^n\neq b_2^\ell$ and $\log_{b_1}\!(b_2)$ is irrational. Third, when $b_1^n\neq b_2^\ell$ but $\log_{b_1}\!(b_2)$ is rational.

\textbf{Case 1 ($b_1^n=b_2^\ell$).} If $b_1^n=b_2^\ell$ then 
\begin{equation}
    J^{n,\ell}(s,t)=\frac{1}{2}(\cos(b_1^n \pi t)-\cos(b_1^n \pi s))^2=\frac{1}{2}(\cos(b_2^\ell \pi t)-\cos(b_2^\ell \pi s))^2.
\end{equation}
Therefore 
\begin{equation}
    |J^{n,\ell}(s,t)|\lesssim 1.
\end{equation}
For $b_1^n\neq b_2^\ell$ we can evaluate the integral as
{
\begin{align*}
    J^{n,\ell}(s,t)&=-b_2^\ell \pi \left(\frac{\cos(t(b_1^n-b_2^\ell))}{2(b_1^n-b_2^\ell)}-\frac{\cos(s(b_1^n-b_2^\ell))}{2(b_1^n-b_2^\ell)}-\frac{\cos(t(b_1^n+b_2^\ell))}{2(b_1^n+b_2^\ell)}+\frac{\cos(s(b_1^n+b_2^\ell))}{2(b_1^n+b_2^\ell)}\right)\\
    &`-\cos(b_1^n\pi s)(\cos(b_2^\ell \pi t)-\cos(b_2^\ell \pi s)),
\end{align*}
using the indefinite integral formula
\begin{equation*}
    \int \cos(mr)\sin(nr) dr=\frac{\cos(r(m-n))}{2(m-n)}-\frac{\cos(r(m+n))}{2(m+n)}+C.
\end{equation*}
Therefore we have that
\begin{equation*}
    |J^{n,\ell}(s,t)|\lesssim \frac{b_2^\ell}{|b_2^\ell-b_1^n|}+1.
\end{equation*}
}

\textbf{Case 2 ($b_1, b_2$ multiplicatively independent).} Now let $\log_{b_1}\!(b_2)$ be irrational. Then, using Lemma \ref{lem:Feldman} we have that there exists some constant $P>0$ depending only on $b_1,b_2$ so that
{\begin{align*}
     |J^{n,\ell}(s,t)|&\lesssim \frac{b_2^{%p_2
     \ell}}{\max\{b_1^n,b_2^\ell\}}(\max\{3,n,\ell\})^P+1\\
     &\lesssim n^P \ell^P. 
\end{align*}}
{Recall that} $0<\varepsilon<\min\{\alpha_1,\alpha_2\}$. There exists some constants $C^1(\varepsilon),C^2(\varepsilon)>0$ depending on $P$, %(which only depends on $b_1,b_2$), 
$\varepsilon$ and on $b_1,b_2$ so that $n^P\leq C^1(\varepsilon) b_1^{\varepsilon n}$ for all $n$ and $\ell^P\leq C^2(\varepsilon) b_2^{\varepsilon \ell}$ for all $\ell$. Therefore we have that bound
\begin{equation}
    |J^{n,\ell}(s,t)|\lesssim b_1^{\varepsilon n}b_2^{\varepsilon \ell}.
\end{equation}

\textbf{Case 3 ($b_1, b_2$ multiplicatively dependent).} Finally, let $\log_{b_1}(b_2)$ be rational. Then there exists some positive integers $b, q_1,q_2$ so that $b_1=b^{q_1}$ and $b_2=b^{q_2}$. Then without loss of generality letting $\ell q_1>n q_2$ we have that
{
\begin{align*}
    |J^{n,\ell}(s,t)|&\lesssim \frac{b_2^\ell}{|b_2^{\ell}-b_1^{n}|}+1\\
    &=\frac{b^{\ell q_1}}{b^{\ell q_1}-b^{n q_2}}+1\\
    &=\frac{b^{\ell q_1-nq_2}}{b^{\ell q_1-nq_2}-1}+1\\
    &\leq \frac{b^{\ell q_1-nq_2}}{b^{\ell q_1-nq_2}-\frac12b^{\ell q_1-nq_2}}+1\\
    &\lesssim 1.
\end{align*}}

Putting cases 1,2,3 together yields bound \textbf{(iv)}
\begin{equation}\label{eq:bound-iv}
    |J^{n,\ell}(s,t)|\lesssim b_1^{\varepsilon n} b_2^{\varepsilon \ell}.
\end{equation}

Using these four bounds we can show the H\"older regularity. Similarly to the proof of Proposition \ref{prop:Holder-continuity}, let $N_1, N_2$ be two nonnegative integers so that 
\begin{align*}
    b_1^{-(N_1+1) }&< |t-s|\leq b_1^{-N_1}\\
    b_2^{-(N_2+1) }&< |t-s|\leq b_2^{-N_2}.
\end{align*}
Without loss of generality, we may assume $N>N_1,N_2$. Indeed, if $N\leq N_1$ or $N\leq N_2$ then 
\begin{align*}
    |I^N(s,t)|&\leq \sum_{n=0}^{N}\sum_{\ell=0}^{N}a_1^n a_2^\ell |J^{n,\ell}(s,t)|\\
    &\leq \sum_{n=0}^{\max\{N_1,N\}}\sum_{\ell=0}^{\max\{N_2,N\}}a_1^n a_2^\ell |J^{n,\ell}(s,t)|
\end{align*}
and the  argument below will bound $I^N$ as well. With this assumption in hand, we can split $I^N$ as
\begin{align*}
I^N(s,t)=&\sum_{n=0}^{N_1}\sum_{\ell=0}^{N_2}a_1^n a_2^\ell J^{n,\ell}(s,t)+\sum_{n=N_1+1}^{N}\sum_{\ell=0}^{N_2}a_1^n a_2^\ell J^{n,\ell}(s,t)\\
&+\sum_{n=0}^{N_1}\sum_{\ell=N_2+1}^{N}a_1^n a_2^\ell J^{n,\ell}(s,t)+\sum_{n=N_1+1}^{N}\sum_{\ell=N_2+1}^{N}a_1^n a_2^\ell J^{n,\ell}(s,t)\\
=:&\:\mathbf{I}+\mathbf{II}+\mathbf{III}+\mathbf{IV}.
\end{align*}
Applying bound \textbf{(i)} to \textbf{I}, bound \textbf{(ii)} to \textbf{II}, bound \textbf{(iii)} to \textbf{III}, and bound \textbf{(iv)} to \textbf{IV} yields that 
\begin{align*}
    |\mathbf{I}|&\lesssim \sum_{n=0}^{N_1}\sum_{\ell=0}^{N_2}a_1^n a_2^\ell b_1^n b_2^\ell (t-s)^2=\left(\sum_{n=0}^{N_1} a_1^n b_1^n (t-s)\right)\left(\sum_{\ell=0}^{N_2} a_2^\ell b_2^\ell(t-s)\right),\\
    |\mathbf{II}|&\lesssim \sum_{n=N_1+1}^N\sum_{\ell=0}^{N_2}a_1^n a_2^\ell b_2^\ell (t-s)=\left(\sum_{n=N_1+1}^N a_1^n\right)\left(\sum_{\ell=0}^{N_2} a_2^\ell b_2^\ell (t-s)\right),\\
    |\mathbf{III}|&\lesssim \sum_{n=0}^{N_1}\sum_{\ell=N_2+1}^{N}a_1^n a_2^\ell  b_1^n (t-s)=\left(\sum_{n=0}^{N_1}a_1^n b_1^n(t-s)\right)\left(\sum_{\ell=n_2+1}^{N} a_2^\ell\right),\\
    |\mathbf{IV}|&\lesssim \sum_{n=N_1+1}^{N}\sum_{\ell=N_2+1}^{N}a_1^n a_2^\ell b_1^{\varepsilon n} b_2^{\varepsilon \ell}=\left(\sum_{n=N_1+1}^{N}a_1^n b_1^{\varepsilon n}\right)\left(\sum_{\ell=N_2+1}^{N}a_2^\ell b_2^{\varepsilon \ell} \right).
\end{align*}
By the choice of $N_1,N_2$ and standard bounds for geometric series we obtain the following estimates:
\begin{align*}
  &\sum_{n=0}^{N_1} a_1^{n}b_1^n (t-s)\lesssim a_1^{N_1}b_1^{N_1}(t-s)\leq a_1^{N_1}= b_1^{-\alpha_1 N_1}\lesssim |t-s|^{\alpha_1},\\
  &\sum_{\ell=0}^{N_2} a_2^{\ell}b_2^\ell (t-s)\lesssim |t-s|^{\alpha_2},\\
  &\sum_{n=N_1+1}^N a_1^n\leq \sum_{n=N_1+1}^\infty a_1^n\lesssim a_1^{N_1}\lesssim |t-s|^{\alpha_1}\\
 & \sum_{\ell=N_2+1}^N a_2^\ell \lesssim |t-s|^{\alpha_2},\\
 &\sum_{n=N_1+1}^{N}{a_1^n} b_1^{\varepsilon n}\lesssim a_1^{N_1} b_1^{\varepsilon N_1}\lesssim |t-s|^{\alpha_1-\varepsilon},\\
 & \sum_{\ell=N_2+1}^{N}a_2^\ell b_2^{\varepsilon \ell}\lesssim |t-s|^{\alpha_2-\varepsilon}.
\end{align*}
  Collecting these six bounds proves the relation \eqref{eq:bound-on-I-1}.
\end{proof}

\begin{corollary}\label{corollary:existence-of-I}
For all $(s,t)$ with $0\leq s\leq t\leq 1$, the limit
\begin{equation}
    \lim_{N\to \infty} I_{(b_1,\alpha_1),(b_2,\alpha_2)}^N(s,t)=:I_{(b_1,\alpha_1),(b_2,\alpha_2)}(s,t)
\end{equation}
exists. 
\end{corollary}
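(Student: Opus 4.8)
The plan is to deduce existence of the limit directly from the uniform bound on the summands established in the proof of Proposition \ref{prop:uniform-bound-Levy-area}, by showing that the double series defining $I^N(s,t)$ converges \emph{absolutely}. Recall that
\begin{equation*}
    I^N(s,t)=\sum_{n=0}^N\sum_{\ell=0}^N a_1^n a_2^\ell\, J^{n,\ell}(s,t),
\end{equation*}
so $I^N(s,t)$ is precisely the partial sum of the double series $\sum_{n,\ell\geq 0} a_1^n a_2^\ell\, J^{n,\ell}(s,t)$ taken over the square $\{0\le n,\ell\le N\}$. It therefore suffices to prove that this double series converges absolutely, since an absolutely convergent double series is summable along any exhaustion of $\mathbb{N}\times\mathbb{N}$ by finite sets, in particular along the nested squares, to one common value.

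First I would invoke bound \textbf{(iv)}, namely inequality \eqref{eq:bound-iv}, which holds for \emph{all} $n,\ell\geq 0$ and all $0\le s\le t\le 1$, giving $|J^{n,\ell}(s,t)|\lesssim b_1^{\varepsilon n} b_2^{\varepsilon \ell}$. Substituting this into the series and recalling $a_i=b_i^{-\alpha_i}$ yields
\begin{equation*}
    \sum_{n,\ell\geq 0} a_1^n a_2^\ell\, |J^{n,\ell}(s,t)| \lesssim \left(\sum_{n\geq 0} b_1^{-(\alpha_1-\varepsilon)n}\right)\left(\sum_{\ell\geq 0} b_2^{-(\alpha_2-\varepsilon)\ell}\right).
\end{equation*}
Since $\varepsilon$ was chosen with $0<\varepsilon<\min\{\alpha_1,\alpha_2\}$, both exponents $\alpha_1-\varepsilon$ and $\alpha_2-\varepsilon$ are strictly positive, so each factor is a convergent geometric series with ratio $b_i^{-(\alpha_i-\varepsilon)}<1$. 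Hence the double series converges absolutely (indeed, the bound is uniform in $(s,t)$), and the limit $I_{(b_1,\alpha_1),(b_2,\alpha_2)}(s,t)$ exists.

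This argument is essentially immediate once Proposition \ref{prop:uniform-bound-Levy-area} is in hand; the only point meriting care — and the closest thing to an obstacle — is the passage from absolute convergence of the double series to convergence of the \emph{square} partial sums $I^N$. I would dispatch this by the standard fact that absolute convergence of a double series implies summability along every exhaustion of $\mathbb{N}\times\mathbb{N}$ by finite sets, all yielding the same value, and noting that the squares $\{0\le n,\ell\le N\}$ form such an exhaustion. Stating this explicitly justifies identifying $\lim_{N\to\infty} I^N(s,t)$ with the sum of the double series and completes the proof.
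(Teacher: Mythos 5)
Your proposal is correct and follows essentially the same route as the paper: both apply bound \textbf{(iv)} to dominate the double series by a product of convergent geometric series and conclude by absolute convergence. Your explicit remark about summability along the exhaustion by squares is a careful elaboration of a step the paper leaves implicit, not a different argument.
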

\begin{proof}
Let $\varepsilon>0$ be such that $\varepsilon<\min\{\alpha_1, \alpha_2\}$. Applying bound \textbf{(iv)} to $I^N$ shows that
    \begin{align*}
        |I^N(s,t)|&\lesssim \sum_{n=0}^N \sum_{\ell=0}^Na_1^n a_2^\ell b_1^{\varepsilon n} b_2^{\varepsilon \ell}\\
        &=\sum_{n=0}^N \sum_{\ell=0}^N b_1^{-n\alpha_1} b_2^{-\ell \alpha_2} b_1^{\varepsilon n} b_2^{\varepsilon \ell}.
    \end{align*}
Absolute convergence follows. 
\end{proof}

With both the pointwise convergence and the uniform estimate \eqref{eq:bound-on-I-1} we can show convergence in the rough topology and even give a geometric rate of convergence. 

\begin{theorem}\label{theorem:main-full}
Let $t\mapsto W(t)=(W_{b_1,\alpha_1}(t),...,W_{b_d,\alpha_d}(t))$ be a $d$-dimensional Weierstrass function where $\alpha_1,...,\alpha_d>1/3$. Let $\alpha=\min\{\alpha_1,...,\alpha_d\}$.

For every $N\in\mathbb N$,  consider the vector-valued function $W_N:\Delta_2^{(0,1)}\to\mathbb{R}^d$ by setting $
W_N(t):=\left(W_{b_1,\alpha_1,N}(t),...,W_{b_d,\alpha_d,N}(t)\right)$. Similarly,  consider the matrix-valued function $\mathbb{A}_N:\Delta_2^{(0,1)}\to \mathbb R^{d\times d}$, defined  entrywise by means of the iterated integrals  \eqref{I^N_(b1,alpha1)(b2,alpha2)}  by setting $\mathbb{A}_N^{i,j}(s,t):=I^N_{(b_i,\alpha_i),(b_j,\alpha_j)}(s,t)$ for $1\leq i,j\leq d$. Let $\mathbf{W}_N:\Delta_2^{(0,1)}\to \mathbb{R}^d\oplus\mathbb{R}^{d\times d}$, $\mathbf{W}_N(s,t)=\left(W_N(t)-W_N(s),\mathbb{A}_N(s,t)\right)$.
Finally, define $\mathbb{A}:\Delta_2^{(0,1)}\to \mathbb R^{d\times d}$ entrywise as $\mathbb{A}^{i,j}(s,t):=\displaystyle\lim_{N\to\infty}\mathbb{A}_N^{i,j}(s,t)$ and let $\mathbf{W}:\Delta_2^{(0,1)}\to \mathbb{R}^d\oplus\mathbb{R}^{d\times d}$, $\mathbf{W}(s,t)=\left(W(t)-W(s),\mathbb{A}(s,t)\right)$. 
Then for every $\varepsilon\in(0,\alpha)$ we have that $\mathbf{W}\in\mathscr C_g^{\alpha-\varepsilon}$. 

{Furthermore, for every $\beta\in (\alpha-\varepsilon,\alpha)$ let us set  $\kappa:=1-\frac{\alpha-\varepsilon}{\beta}\in (0,1)$. Then for every $\varepsilon'\in (0,\alpha)$, setting $\rho=\left(\max\!\left\{b_1^{-\alpha_1+\varepsilon'},\ldots ,b_d^{-\alpha_d+\varepsilon'}\right\}\right)^{\kappa}\in(0,1)$,  
we have the estimate
\begin{equation}\label{eq:geometric-convergence}
    \|\mathbf{W}-\mathbf{W}_N\|_{\alpha-\varepsilon,2\alpha-2\varepsilon}\lesssim \rho^{N}.
\end{equation}
In other words, $\mathbf{W}_N$ converges to $\mathbf{W}$ geometrically fast in the rough topology as $N\to\infty$. }
\end{theorem}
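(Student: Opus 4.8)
The plan is to combine the uniform estimates already established with a quantitative interpolation between two Hölder scales. Fix $\varepsilon\in(0,\alpha)$ and introduce an auxiliary exponent $\beta\in(\alpha-\varepsilon,\alpha)$, say $\beta=\alpha-\varepsilon/2$. First I would record the two hypotheses needed for Theorem \ref{theorem:interpolation}. Each truncated component $W_{b_i,\alpha_i,N}$ is $\alpha_i$-Hölder uniformly in $N$, by the same bulk/tail splitting as in the proof of Proposition \ref{prop:Holder-continuity} (the truncated series only has fewer terms), so $\sup_N\|W_N\|_\beta<\infty$; and Proposition \ref{prop:uniform-bound-Levy-area}, applied with a cutoff $\varepsilon''>0$ small enough that $\alpha_i+\alpha_j-2\varepsilon''\geq 2\beta$, gives $|\mathbb{A}_N^{i,j}(s,t)|\lesssim|t-s|^{2\beta}$ on $\{|t-s|\leq 1\}$ and hence $\sup_N\|\mathbb{A}_N\|_{2\beta}<\infty$. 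Since each $W_N$ is a trigonometric polynomial and $\mathbb{A}_N$ is its genuine iterated Riemann–Stieltjes integral, $\mathbf{W}_N=\iota(W_N)\in\mathring{\mathscr C}_g^\beta$; relation $(ii)$ of Definition \ref{def:rough-path} (Chen's relation) for $\mathbf{W}$ then follows by passing to the pointwise limit supplied by Corollary \ref{corollary:existence-of-I}. With the pointwise convergence $W_N(t)\to W(t)$ and $\mathbb{A}_N(0,t)\to\mathbb{A}(0,t)$, Theorem \ref{theorem:interpolation} delivers $\mathbf{W}\in\mathscr C_g^{\alpha-\varepsilon}$, which is the first assertion.

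For the geometric rate I would next establish two sup-type decay estimates. Write $\sigma:=\max_i b_i^{-(\alpha_i-\varepsilon')}\in(0,1)$ and $M_N:=\sigma^N=\max_i b_i^{-(\alpha_i-\varepsilon')N}$. The tail of the Weierstrass series gives $\|W-W_N\|_\infty\lesssim\max_i b_i^{-\alpha_i N}\leq M_N$. For the second-level object I would re-apply bound \textbf{(iv)} of Proposition \ref{prop:uniform-bound-Levy-area} (now with cutoff $\varepsilon'$) to the tail of the double sum defining $\mathbb{A}^{i,j}(0,u)-\mathbb{A}_N^{i,j}(0,u)=\sum_{n>N\text{ or }\ell>N}a_i^n a_j^\ell J^{n,\ell}(0,u)$, obtaining $\sup_u|\mathbb{A}^{i,j}(0,u)-\mathbb{A}_N^{i,j}(0,u)|\lesssim\sum_{n>N\text{ or }\ell>N}b_i^{-(\alpha_i-\varepsilon')n}b_j^{-(\alpha_j-\varepsilon')\ell}\lesssim M_N$, the sum converging since $\varepsilon'<\alpha\leq\alpha_i$. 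This is exactly where $\varepsilon'$ enters: the $b_i^{\varepsilon' n}$ factor in bound \textbf{(iv)} degrades the geometric ratio from $\alpha_i$ to $\alpha_i-\varepsilon'$.

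I would then interpolate these sup bounds against the uniform $\beta$ and $2\beta$ bounds, setting $\kappa=1-\frac{\alpha-\varepsilon}{\beta}\in(0,1)$. For the first level this is the elementary Hölder interpolation: splitting the supremum defining $[W-W_N]_{\alpha-\varepsilon}$ at a threshold $|t-s|=\delta$, bounding small increments by $[W-W_N]_\beta\,\delta^{\beta-(\alpha-\varepsilon)}\lesssim\delta^{\beta-(\alpha-\varepsilon)}$ and large ones by $2\|W-W_N\|_\infty\,\delta^{-(\alpha-\varepsilon)}\lesssim M_N\delta^{-(\alpha-\varepsilon)}$, and optimizing at $\delta=M_N^{1/\beta}$, yields $[W-W_N]_{\alpha-\varepsilon}\lesssim M_N^{\kappa}=\rho^N$. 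For the second level I would run the same split, but since $\mathbb{A}$ lives on $\Delta_2^{(0,1)}$ a sup norm is unavailable directly; instead I use relation $(ii)$ to write $\mathbb{A}^{i,j}_{s,t}-\mathbb{A}_{N,s,t}^{i,j}$ in terms of the $s=0$ differences (controlled by the second sup estimate) and a bilinear increment correction, namely the difference of products of increments of $W^i,W^j$ and of $W_N^i,W_N^j$, which after a telescoping is controlled by $\|W-W_N\|_\infty$ together with the uniform sup bounds on $W,W_N$. Both contributions are $\lesssim M_N$, so large increments contribute $\lesssim M_N\delta^{-2(\alpha-\varepsilon)}$ and small ones $\lesssim\delta^{2\beta-2(\alpha-\varepsilon)}$; optimizing at $\delta=M_N^{1/(2\beta)}$ again produces exponent $\kappa$, whence $[\mathbb{A}-\mathbb{A}_N]_{2(\alpha-\varepsilon)}\lesssim M_N^{\kappa}=\rho^N$. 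Adding the two levels gives \eqref{eq:geometric-convergence}.

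The step I expect to be the main obstacle is the second-level interpolation: because the Lévy-area component is a genuine two-parameter object on $\Delta_2^{(0,1)}$, the naive one-parameter interpolation inequality does not apply, and one must use relation $(ii)$ to trade the two-parameter seminorm for the one-parameter quantity $\sup_u|\mathbb{A}_{0,u}-\mathbb{A}_{N,0,u}|$ while carefully controlling the bilinear increment corrections. The secondary difficulty is bookkeeping: one must keep the several small parameters—$\varepsilon$, the auxiliary $\beta$, the cutoff $\varepsilon''$ used for the uniform $2\beta$ bound, and the rate-degrading $\varepsilon'$—mutually consistent so that a single ratio $\rho=\sigma^{\kappa}<1$ governs both levels simultaneously.
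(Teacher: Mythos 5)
Your proposal is correct and follows essentially the same route as the paper: uniform $(\beta,2\beta)$ bounds coming from Propositions \ref{prop:Holder-continuity} and \ref{prop:uniform-bound-Levy-area}, pointwise convergence, and interpolation of these against the sup-norm tail decay at rate $\max_i b_i^{-(\alpha_i-\varepsilon')N}$ to produce the exponent $\kappa$; the only difference is that the paper cites the interpolation inequality as a black box (\cite{Friz-Victoir-Book}, Proposition 5.5) where you re-derive it by threshold splitting. The one place you overcomplicate matters is the second level: the obstacle you anticipate is not actually there, because bound \textbf{(iv)} of Proposition \ref{prop:uniform-bound-Levy-area} holds uniformly in $(s,t)$, so the two-parameter supremum $\sup_{s,t}\bigl|\mathbb{A}^{i,j}(s,t)-\mathbb{A}_N^{i,j}(s,t)\bigr|$ is bounded directly by the tail of the double geometric series, with no need for Chen's relation or bilinear increment corrections --- this is exactly what the paper does in \eqref{eq:sup-second}.
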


\begin{proof} {We may use the interpolation inequality (see \cite{Friz-Victoir-Book}, Proposition 5.5)
\begin{equation*}
    \|\mathbf{W}-\mathbf{W}_N\|_{\alpha-\varepsilon,2\alpha-2\varepsilon}\leq \|\mathbf{W}-\mathbf{W}_N\|_{\beta,2\beta}^{1-\kappa} \|\mathbf{W}-\mathbf{W}_N\|_\infty^{\kappa}.
\end{equation*}
 Thanks to the uniform bound on the $\beta$ rough-path distance given in Proposition \ref{prop:uniform-bound-Levy-area} we just need to bound the supremum norm of $\mathbf{W}-\mathbf{W}_N$. For the Weierstrass function $W_{b_i,\alpha_i}$ we have the estimate
 \begin{equation}\label{eq:sup-first}
     \sup_{s,t\in [0,1]} \left|\sum_{n=N+1}^\infty b_i^{-n\alpha_i} (\cos(b_i^n \pi t)-\cos(b_i^n\pi s))\right|\leq \sup_{s,t\in [0,1]} 2\sum_{n=N+1}^\infty b_i^{-n\alpha_i} \lesssim b_i^{-N\alpha_i}. 
 \end{equation}
 For the iterated integral $I^N_{(b_i,\alpha_i),(b_j,\alpha_j)}$ we need to estimate
 \begin{equation*}
      \sup_{s,t\in [0,1]}\left|\sum_{n=1}^\infty \sum_{\ell=N+1}^\infty b_i^{-n\alpha_i}b_j^{-\ell\alpha_j} J^{n,\ell}(s,t)+\sum_{n=N+1}^\infty \sum_{\ell=1}^\infty b_i^{-n\alpha_i}b_j^{-\ell\alpha_j} J^{n,\ell}(s,t)+\sum_{n=N+1}^\infty \sum_{\ell=N+1}^\infty b_i^{-n\alpha_i}b_j^{-\ell\alpha_j} J^{n,\ell}(s,t)\right|.
 \end{equation*} 
 The triangle inequality, bound \textbf{(iv)} given in \eqref{eq:bound-iv} and the standard tail estimates for geometric series give the estimate
 \begin{equation}\label{eq:sup-second}
    \sup_{s,t\in [0,1]}|I_{(b_i,\alpha_i),(b_j,\alpha_j)}(s,t)-I^N_{(b_i,\alpha_i),(b_j,\alpha_j)}(s,t)|\lesssim b_i^{(-\alpha_i+\varepsilon')N} b_j^{(-\alpha_j+\varepsilon')N}
 \end{equation} 
 for any $\varepsilon'$ with $\min\{\alpha_i,\alpha_j\}>\varepsilon'>0$. Using estimates \eqref{eq:sup-first} and \eqref{eq:sup-second} along with the bound \eqref{eq:bound-on-I-1} on the norm $\|\mathbf{W}-\mathbf{W}_N\|_{\beta,2\beta}$ yields the claimed relation \eqref{eq:geometric-convergence}.}
\end{proof}
\begin{remark}\label{remark:sin-cos}
    Note that we prove everything for Weierstrass functions with $\cos$. However, the same arguments work with little difference if we replace every {cosine} with {sine}. However, as shown in \cite{Imkeller-Weierstrass} and further studied in \cite{Imkeller-Weierstrass-2}, if we replace one  {cosine} with  {sine}, the arguments above no longer work.
\end{remark}
{Theorem \ref{theorem:main-full} is optimal in the sense that one cannot remove the $-\varepsilon$ factor. Indeed, the partial sums of the first order process do not even converge in the function space $C^\alpha([0,1],\mathbb R^d)$. We include a proof of this fact for expository reasons, although we suspect it is well known. 
\begin{proposition}\label{proposition:ref-1}
    The partial sums $W_{b,\alpha,N}$ do not converge in $C^\alpha([0,1],\mathbb R^d)$ to $W_{b,\alpha}$.
\end{proposition}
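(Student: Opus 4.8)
The plan is to show that the $C^\alpha$-distance $\|W_{b,\alpha}-W_{b,\alpha,N}\|_\alpha$ stays bounded below by a fixed positive constant, independent of $N$, so it cannot tend to $0$. The point is that the tail
\[
R_N(t):=W_{b,\alpha}(t)-W_{b,\alpha,N}(t)=\sum_{n=N+1}^\infty b^{-n\alpha}\cos(b^n\pi t)
\]
is not merely some $\alpha$-H\"older function: after reindexing $n=m+N+1$ it is an \emph{exact rescaled copy} of $W_{b,\alpha}$ itself, namely $R_N(t)=b^{-(N+1)\alpha}\,W_{b,\alpha}(b^{N+1}t)$. This self-similarity is the crux of the argument, and it is what lets us avoid any delicate quantitative lower bound on the oscillation of $W_{b,\alpha}$ (e.g.\ a nowhere-differentiability/anti-H\"older estimate).

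First I would record the scaling identity above. Then, for $s,t\in[0,1]$, I substitute $u=b^{N+1}t$ and $v=b^{N+1}s$ and use $|t-s|^\alpha=b^{-(N+1)\alpha}|u-v|^\alpha$, so that the prefactor $b^{-(N+1)\alpha}$ cancels on the nose:
\[
\frac{|R_N(t)-R_N(s)|}{|t-s|^\alpha}=\frac{|W_{b,\alpha}(u)-W_{b,\alpha}(v)|}{|u-v|^\alpha}.
\]
Since $(s,t)\in[0,1]^2$ corresponds bijectively to $(u,v)\in[0,b^{N+1}]^2$, taking the supremum shows that the $\alpha$-H\"older seminorm of $R_N$ computed over $[0,1]$ equals the $\alpha$-H\"older seminorm of $W_{b,\alpha}$ computed over the dilated interval $[0,b^{N+1}]$.

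Next I would bound this from below by monotonicity of the supremum: since $[0,1]\subseteq[0,b^{N+1}]$ for every $N\ge 0$ (because $b\ge 2$), the supremum over the larger interval dominates the one over $[0,1]$, giving
\[
\|W_{b,\alpha}-W_{b,\alpha,N}\|_\alpha=\sup_{u\neq v\in[0,b^{N+1}]}\frac{|W_{b,\alpha}(u)-W_{b,\alpha}(v)|}{|u-v|^\alpha}\;\ge\;\sup_{u\neq v\in[0,1]}\frac{|W_{b,\alpha}(u)-W_{b,\alpha}(v)|}{|u-v|^\alpha}=:c.
\]
By Proposition \ref{prop:Holder-continuity} the constant $c$ is finite, and it is strictly positive because $W_{b,\alpha}$ is nonconstant on $[0,1]$, so at least one increment is nonzero. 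As $c>0$ does not depend on $N$, the sequence $W_{b,\alpha,N}$ cannot converge to $W_{b,\alpha}$ in $C^\alpha([0,1],\mathbb R)$, which is the assertion.

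The conceptual step, and the only place needing genuine care, is recognizing the exact self-similarity $R_N(t)=b^{-(N+1)\alpha}W_{b,\alpha}(b^{N+1}t)$ and checking that the change of variables leaves the H\"older quotient invariant; everything afterward is monotonicity of a supremum plus the triviality $c>0$. A more pedestrian alternative would be to exhibit explicit pairs $(s,t)$ at scale $b^{-(N+1)}$ realizing a fixed fraction of the oscillation, but that route forces one to track the parities of $\cos(b^{\,n-N-1}\pi)$ and to separate the cases of even and odd $b$; the scaling argument bypasses this entirely.
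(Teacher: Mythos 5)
Your proof is correct, and it takes a genuinely different route from the paper's. You exploit the exact self-similarity of the tail, $R_N(t)=b^{-(N+1)\alpha}W_{b,\alpha}(b^{N+1}t)$, observe that the change of variables $u=b^{N+1}t$, $v=b^{N+1}s$ leaves the H\"older quotient invariant, and conclude that $\|R_N\|_{\alpha;[0,1]}$ equals the $\alpha$-seminorm of $W_{b,\alpha}$ over $[0,b^{N+1}]$, hence is at least the seminorm $c>0$ over $[0,1]$ (positive since $W_{b,\alpha}$ is nonconstant, e.g.\ because it is nowhere differentiable, or by directly checking $W_{b,\alpha}(1)\neq W_{b,\alpha}(0)$). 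The paper instead does exactly the ``pedestrian alternative'' you describe: it evaluates the quotient at the explicit pair $s=0$, $t=b^{-N}$ (for $b$ odd) or $t=b^{-N-1}$ (for $b$ even), sums the tail in closed form using the parity of $\cos(b^{n-N}\pi)$ or $\cos(b^{n-N-1}\pi)$, and obtains the explicit lower bounds $\frac{2b^{-\alpha}}{1-b^{-\alpha}}$ and $2$ respectively. Your scaling argument is cleaner in that it avoids the even/odd case split entirely and isolates the structural reason for the failure of convergence (the tail is a rescaled copy of the whole function, and the $\alpha$-seminorm is scale-invariant at exactly the H\"older exponent of $W_{b,\alpha}$); what the paper's computation buys in exchange is a concrete witness pair $(s,t)$ and an explicit numerical value for the lower bound, whereas your constant $c$ is only shown to be positive. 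Either argument fully establishes the proposition.
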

\begin{proof}
    First, let $b$ be odd. In this case by selecting $t=b^{-N}$ and $s=0$ in the definition of the H\"older norm we get
\begin{align*}
\|W_{b,\alpha}-W_{b,\alpha,N}\|_\alpha&:=\sup_{t\neq s\in [0,1]} \frac{|(W_{b,\alpha}-W_{b,\alpha,N})(t)-(W_{b,\alpha}-W_{b,\alpha,N})(s)|}{|t-s|^\alpha}\\
&\geq \frac{|(W_{b,\alpha}-W_{b,\alpha,N})(b^{-N})-(W_{b,\alpha}-W_{b,\alpha,N})(0)|}{|b^{-N}|^\alpha}.
\end{align*}
Using the definition of $W_{b,\alpha}$ and $W_{b,\alpha,N}$ yields
\begin{align*}
|(W_{b,\alpha}-W_{b,\alpha,N})(b^{-N})-(W_{b,\alpha}-W_{b,\alpha,N})(0)||&=\left|\sum_{n=N+1}^\infty b^{-\alpha n}(\cos(b^{n-N} \pi)-\cos(0))\right|\\
&=\left|\sum_{n=N+1}^\infty b^{-\alpha n}(-1-1)\right|\\
&=\left|\sum_{n=1}^\infty b^{-\alpha(n-N)}2\right|\\
&=b^{-N\alpha} \frac{2b^{-\alpha}}{1-b^{-\alpha}},
\end{align*}
where we used that $b$ was odd in the second line. Therefore
\begin{align*}
    \|W_{b,\alpha}-W_{b,\alpha,N}\|_\alpha&\geq b^{-N\alpha} \frac{2b^{-\alpha}}{1-b^{-\alpha}} b^{N\alpha}\\
    &=\frac{2b^{-\alpha}}{1-b^{-\alpha}}\\
    &\not\to 0.
\end{align*}
Now, let $b$ be even. In this case we can let $t=b^{-N-1}$ and $s=0$ to get
\begin{align*}
\|W_{b,\alpha}-W_{b,\alpha,N}\|_\alpha&:=\sup_{t\neq s\in [0,1]} \frac{|(W_{b,\alpha}-W_{b,\alpha,N})(t)-(W_{b,\alpha}-W_{b,\alpha,N})(s)|}{|t-s|^\alpha}\\
&\geq \frac{|(W_{b,\alpha}-W_{b,\alpha,N})(b^{-N-1})-(W_{b,\alpha}-W_{b,\alpha,N})(0)|}{|b^{-N-1}|^\alpha}.
\end{align*}
Again using the definition of $W_{b,\alpha}$ and $W_{b,\alpha,N}$ yields
\begin{align*}
|(W_{b,\alpha}-W_{b,\alpha,N})(b^{-N-1})-(W_{b,\alpha}-W_{b,\alpha,N})(0)|&=\left|\sum_{n=N+1}^\infty b^{-\alpha n}(\cos(b^{n-N-1} \pi)-\cos(0))\right|\\
&=2 b^{-\alpha(N+1)}
\end{align*}
where we used that $\cos(b^{n-N-1}\pi)-\cos(0)=0$ for all $n>N+1$ (thanks to $b$ being even) and is $-2$ if $n=N+1$. Therefore
\begin{align*}
    \|W_{b,\alpha}-W_{b,\alpha,N}\|_\alpha&\geq 2 b^{-\alpha(N+1)}b^{\alpha(N+1)}\\
    &=2\\
    &\not\to 0.
\end{align*}
\end{proof}}
{\subsection{Pointwise Convergence}
The power of our main Theorem \ref{theorem:main-full} is the convergence in the rough topology of the approximating iterated integrals. We used the estimate \eqref{eq:bound-on-I-1} in order to get convergence in the rough topology and we needed an estimate on the supremum norm to get the rate of convergence. However, if we are interested only in pointwise convergence, the following proposition addresses a broad class of functions defined as trigonometric series. 
\begin{proposition}\label{prop:ref-2}
    Let $\{\alpha_n\}_{n\in \mathbb N}$ and $\{\beta_n\}_{n\in \mathbb N}$ be two sequences of positive real numbers. Define
    \begin{equation*}
        f(t)=\sum_{n=1}^\infty c_n e^{i\alpha_n t},\qquad g(t)=\sum_{n=1}^\infty d_n e^{i\beta_n t},
    \end{equation*}
    where $\{c_n\}_{n\in \mathbb N}$ and $\{d_n\}_{n\in \mathbb N}$ are two sequences of complex numbers so that
    \begin{equation*}
        \sum_{n=1}^\infty |c_n|<\infty, \qquad \sum_{n=1}^\infty |d_n|<\infty.
    \end{equation*}
    Define, for $N\in \mathbb N$, the partial sums
    \begin{equation*}
        f_N(t)=\sum_{n=1}^N c_n e^{i\alpha_n t},\qquad g_N(t)=\sum_{n=1}^N d_n e^{i\beta_n t}.
    \end{equation*}
    Then for every $s,t\in \mathbb R$ the limit
    \begin{equation*}
        \lim_{N\to\infty} \int_s^t f_N(r) dg_N(r)
    \end{equation*}
    exists. 
\end{proposition}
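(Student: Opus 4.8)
The plan is to exploit the fact that each partial sum $g_N$ is a finite sum of smooth exponentials, so that the Riemann--Stieltjes integral reduces to an ordinary Riemann integral once we differentiate the integrator. First I would write $dg_N(r)=g_N'(r)\,dr$ with $g_N'(r)=\sum_{\ell=1}^N i\beta_\ell d_\ell e^{i\beta_\ell r}$, insert the explicit expansion of $f_N$, and expand the product into a finite double sum of elementary exponential integrals:
\begin{equation*}
\int_s^t f_N(r)\,dg_N(r)=\sum_{n=1}^N\sum_{\ell=1}^N c_n d_\ell\, i\beta_\ell\int_s^t e^{i(\alpha_n+\beta_\ell)r}\,dr.
\end{equation*}

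The crucial observation is that, because both sequences $\{\alpha_n\}$ and $\{\beta_n\}$ consist of \emph{positive} reals, the frequency $\alpha_n+\beta_\ell$ never vanishes; there are therefore no resonant ``diagonal'' terms of the kind that required the separate treatment seen in Proposition \ref{prop:uniform-bound-Levy-area}. I would evaluate the inner integral directly as $\big(e^{i(\alpha_n+\beta_\ell)t}-e^{i(\alpha_n+\beta_\ell)s}\big)/\big(i(\alpha_n+\beta_\ell)\big)$, so that after cancelling the factor $i$ each summand takes the form $c_n d_\ell\,\frac{\beta_\ell}{\alpha_n+\beta_\ell}\big(e^{i(\alpha_n+\beta_\ell)t}-e^{i(\alpha_n+\beta_\ell)s}\big)$. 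Here positivity does the essential work a second time: since $0<\beta_\ell<\alpha_n+\beta_\ell$ we have the uniform bound $\beta_\ell/(\alpha_n+\beta_\ell)<1$, which prevents the large frequencies $\beta_\ell$ introduced by the differentiation from causing divergence.

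Finally I would establish absolute convergence of the double series. Using $|e^{i\theta t}-e^{i\theta s}|\le 2$ together with the ratio bound above, each term is dominated in modulus by $2|c_n||d_\ell|$, so the full double sum is majorized by $2\big(\sum_{n}|c_n|\big)\big(\sum_{\ell}|d_\ell|\big)$, which is finite by hypothesis. Absolute convergence of this majorant implies that the partial sums $\int_s^t f_N(r)\,dg_N(r)$ form a Cauchy sequence in $\mathbb{C}$ and hence converge, which proves the claim. I do not expect a serious obstacle here: the only point to monitor is that the positivity assumption on $\{\alpha_n\}$ and $\{\beta_n\}$ gets used twice---once to rule out vanishing denominators and once to keep $\beta_\ell/(\alpha_n+\beta_\ell)$ below $1$---and dropping it would reintroduce precisely the resonance phenomenon that the main argument had to handle by hand.
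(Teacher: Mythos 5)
Your proposal is correct and follows essentially the same route as the paper: both reduce $\int_s^t f_N\,dg_N$ to a finite double sum of elementary exponential integrals, use the positivity of $\alpha_n$ and $\beta_\ell$ to get the key uniform bound $\left|\beta_\ell\,\frac{e^{i(\alpha_n+\beta_\ell)t}-e^{i(\alpha_n+\beta_\ell)s}}{\alpha_n+\beta_\ell}\right|\leq 2$, and conclude via the summability of $\sum|c_n|$ and $\sum|d_\ell|$. The only cosmetic difference is that you invoke absolute convergence of the full double series, while the paper phrases the same estimate as a Cauchy argument on the difference $f_Ng_N'-f_Mg_M'$.
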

\begin{proof}
    As $g_N$ is smooth we can write
    \begin{equation*}
         \int_s^t f_N(r) dg_N(r)= \int_s^t f_N(r) g_N'(r)dr
    \end{equation*}
    and by linearity 
    \begin{equation*}
        g_N'(r)=\sum_{n=1}^N (i\beta_n) d_n e^{i\beta_n r}.
    \end{equation*}
    For $N>M$ we can write
    \begin{equation*}
        f_N(r)g_N'(r)-f_M(r)g_m'(r)=f_N(r)\left(g_N'(r)-g_M'(r)\right)+g_M'(r)\left(f_N(r)-f_M(r)\right).
    \end{equation*}
    We have that
    \begin{align*}
        \int_s^t f_N(r)(g_N'(r)-g_M'(r))dr&=\int_s^t \sum_{n_1=1}^N\sum_{n_2=M+1}^N c_{n_1} (i\beta_{n_2})d_{n_2} e^{ir(\alpha_{n_1}+\beta_{n_2})}dr\\
        &=\sum_{n_1=1}^N\sum_{n_2=M+1}^N c_{n_1} (i\beta_{n_2})d_{n_2} \frac{e^{it(\alpha_{n_1}+\beta_{n_2})}-e^{is(\alpha_{n_1}+\beta_{n_2})}}{i(\alpha_{n_1}+\beta_{n_2})}.
    \end{align*}
    Crucially, as $\alpha_n$ and $\beta_n$ are positive, we get that
    \begin{equation*}
        \left|(i\beta_{n_2}) \frac{e^{it(\alpha_{n_1}+\beta_{n_2})}-e^{is(\alpha_{n_1}+\beta_{n_2})}}{i(\alpha_{n_1}+\beta_{n_2})}\right|\leq 2.
    \end{equation*}
    Using this bound we obtain the estimate
    \begin{equation*}
        \left| \int_s^t f_N(r)(g_N'(r)-g_M'(r))dr\right|\leq 2 \sum_{n_1=1}^N |c_{n_1}|\sum_{n_2=M+1}^N|d_{n_2}|.
    \end{equation*}
    As $M,N\to\infty$ the right hand side of this inequality goes to $0$. Analogously we get that
     \begin{equation*}
        \left| \int_s^t g_M'(r)(f_N(r)-f_M(r))dr\right|\leq 2 \sum_{n_1=M+1}^N |c_{n_1}|\sum_{n_2=1}^N|d_{n_2}|,
    \end{equation*}
    with again the right hand side going to $0$ as $M,N\to\infty$.
\end{proof}
One important consequence of Proposition \ref{prop:ref-2} is to consider the ``complex" Weierstrass functions
\begin{equation*}
    f(t)=\sum_{n=1}^\infty b_1^{-\alpha_1 n}e^{ib_1^n \pi t}
\end{equation*}
and 
\begin{equation*}
    g(t)=\sum_{n=1}^\infty b_2^{-\alpha_2 n}e^{ib_2^n \pi t}.
\end{equation*}
In this case, Proposition \ref{prop:ref-2} implies that the limit
\begin{equation*}
    \lim_{N\to\infty}\int_s^t f_N(r)dg_N(r)
\end{equation*}
exists. Define
\begin{equation*}
    F_1(t)=\sum_{n=1}^\infty b_1^{-\alpha_1 n}\cos(b_1^n \pi t),\qquad F_2(t)=\sum_{n=1}^\infty b_1^{-\alpha_1 n}\sin(b_1^n \pi t)
\end{equation*}
and 
\begin{equation*}
    G_1(t)=\sum_{n=1}^\infty b_2^{-\alpha_2 n}\cos(b_2^n \pi t),\qquad G_2(t)=\sum_{n=1}^\infty b_2^{-\alpha_2 n}\sin(b_2^n \pi t)
\end{equation*}
so that $f=F_1+F_2 i$ and $g=G_1+G_2 i$. Proposition \ref{prop:ref-2} along with taking the imaginary part implies that one can make sense of the integral 
\begin{equation*}
    \int_s^t F_1 (r) dG_2(r)+\int_s^t F_2(r) dG_1(r).
\end{equation*}
 However as was shown in \cite{Imkeller-Weierstrass} one cannot make sense of the integral
\begin{equation*}
    \int_s^t F_1 (r) dG_2(r)-\int_s^t F_2(r) dG_1(r)
\end{equation*}
as a limit of integrals of partial sums. Our Theorem \ref{theorem:main-full} along with Remark \ref{remark:sin-cos} show that one can make sense of
\begin{equation*}
     \int_s^t F_1 (r) dG_1(r)
\end{equation*}
and 
\begin{equation*}
     \int_s^t F_2 (r) dG_2(r).
\end{equation*}
}

\section{An Example}
One of the benefits of rough paths theory is a robust approximation theory. By standard approximation theory for rough differential equations (see \cite{Friz-Hairer-Book}, Chapter 8), we can approximate solutions to rough differential equations (RDEs) driven by a {vector-valued} Weierstrass function by a RDE/ODE driven by the smooth truncated sums $W_N$. 
Let $d=2$ and 
\begin{align}\label{ex:2x2-matrix}
    M(Y)=\left(\begin{array}{cc}
    0 & Y_2/3 \\
    Y_1/2 & 0 
\end{array}\right).
\end{align}

Figure \ref{fig:three-SDE-approximations} shows the solutions $Y_N=(Y_{1,N},Y_{2,N})$ of the  ordinary differential equation  $dY=M(Y)dW_N$ driven by $W_N=(W_{b_1,\alpha_1,N},W_{b_2,\alpha_2,N})$ where 
 $b_1,b_2, \alpha_1, \alpha_2$ are as in Figure \ref{fig:two-Weierstrass-functions}.
 Due to standard approximation theory, $Y_N$ converges as $N\to\infty$ to the solution of the rough differential equation 
 \begin{align}\label{RDE}
    dY=M(Y)d\mathbf{W},
    % \left(\begin{array}{c}dY_1\\dY_2\end{array}\right)=M
 \end{align}
where $\mathbf{W}=(W,\mathbb{A})$ is as in Theorem \ref{theorem:main-full}. 
\begin{figure}[h!]
    \centering
\includegraphics[width=12.7cm]{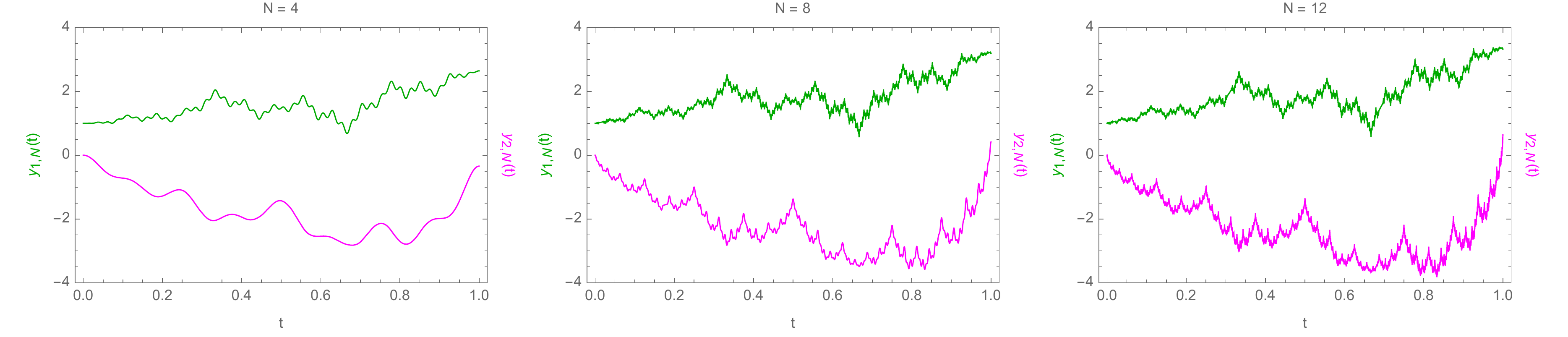}
    \caption{The approximate solutions to \eqref{RDE} with $M$ as in \eqref{ex:2x2-matrix}, and initial condition $Y_1(0)=1$, $Y_2(0)=0$, where $N=4,8,12$ and $b_1=2, \alpha_1=-\frac{\ln 18/25}{\ln 2}$ and $b_2=3, \alpha_2=-\frac{\ln 3/5}{\ln 3}$ as in Figure \ref{fig:two-Weierstrass-functions}.}
    \label{fig:three-SDE-approximations}
\end{figure}

\bibliographystyle{crplain}
\nocite{*}

\bibliography{bibliography}
\end{document}